\newtheorem{theorem}{Theorem}[section]
\newtheorem{lemma}[theorem]{Lemma}
\newtheorem{proposition}[theorem]{Proposition}
\theoremstyle{remark}
\newtheorem{definition}[theorem]{\sc Definition}
\newtheorem{example}[theorem]{\sc Example}
\newtheorem*{acknowledgment}{\sc Acknowledgment}
\numberwithin{equation}{section}
\begin{document}

\title{Toric Fano varieties associated to building sets}

\author{Yusuke Suyama}
\address{Department of Mathematics, Graduate School of Science, Osaka City University,
3-3-138 Sugimoto, Sumiyoshi-ku, Osaka 558-8585 JAPAN}
\email{d15san0w03@st.osaka-cu.ac.jp}

\subjclass[2010]{Primary 14M25; Secondary 14J45, 05C20.}

\keywords{toric Fano varieties, building sets, nested sets, directed graphs.}

\date{\today}


\begin{abstract}
We characterize building sets
whose associated nonsingular projective toric varieties are Fano.
Furthermore, we show that all such toric Fano varieties
are obtained from smooth Fano polytopes associated to finite directed graphs.
\end{abstract}

\maketitle

\section{Introduction}

An $n$-dimensional {\it toric variety} is a normal algebraic variety $X$ over $\mathbb{C}$
containing the algebraic torus $(\mathbb{C}^*)^n$ as an open dense subset,
such that the natural action of $(\mathbb{C}^*)^n$ on itself extends to an action on $X$.
The category of toric varieties is equivalent to the category of fans,
which are combinatorial objects.

A nonsingular projective algebraic variety is said to be {\it Fano}
if its anticanonical divisor is ample.
The classification of toric Fano varieties is a fundamental problem
and has been studied by many researchers.
In particular, {\O}bro \cite{Obro} gave an algorithm that classifies all toric Fano varieties
for any given dimension.

There is a construction of nonsingular projective toric varieties from building sets.
The class of such toric varieties includes toric varieties corresponding to
graph associahedra of finite simple graphs \cite{Postnikov}.
On the other hand, Higashitani \cite{Higashitani}
gave a construction of integral convex polytopes from finite directed graphs.
There is a one-to-one correspondence between smooth Fano polytopes
and toric Fano varieties.
He also gave a necessary and sufficient condition for
the polytope to be smooth Fano in terms of the finite directed graph.

In this paper, we give a necessary and sufficient condition
for the toric variety associated to a building set to be Fano in terms of the building set
(Theorem \ref{theorem1}).
The author \cite{Suyama} characterized finite simple graphs
whose associated toric varieties are Fano.
Theorem \ref{theorem1} generalizes this result (Example \ref{examples} (2)).
Furthermore, we prove that any toric Fano variety associated to a building set
is obtained from the smooth Fano polytope
associated to a finite directed graph (Theorem \ref{theorem2}).

The structure of the paper is as follows.
In Section 2, we state the characterization of building sets
whose associated toric varieties are Fano.
In Section 3, we give its proof.
In Section 4, we show that all such toric Fano varieties
are obtained from finite directed graphs.

\begin{acknowledgment}
This work was supported by Grant-in-Aid for JSPS Fellows 15J01000.
The author wishes to thank his supervisor, Professor Mikiya Masuda,
for his continuing support.
Professor Akihiro Higashitani gave me valuable suggestions and comments.
\end{acknowledgment}

\section{Building sets whose associated toric varieties are Fano}

We review the construction of a toric variety from a building set.
Let $S$ be a nonempty finite set.
A {\it building set} on $S$ is a finite set $B$ of nonempty subsets of $S$
satisfying the following conditions:
\begin{enumerate}
\item If $I, J \in B$ and $I \cap J \ne \emptyset$, then we have $I \cup J \in B$.
\item For every $i \in S$, we have $\{i\} \in B$.
\end{enumerate}
We denote by $B_{\rm max}$ the set of all maximal (by inclusion) elements of $B$.
An element of $B_{\rm max}$ is called a {\it $B$-component}
and $B$ is said to be {\it connected} if $B_{\rm max}=\{S\}$.
For a nonempty subset $C$ of $S$, we call $B|_C=\{I \in B \mid I \subset C\}$
the {\it restriction} of $B$ to $C$. $B|_C$ is a building set on $C$.
Note that we have $B=\bigsqcup_{C \in B_{\rm max}} B|_C$ for any building set $B$.
In particular, any building set is a disjoint union of connected building sets.

\begin{definition}\label{nested}
A {\it nested set} of $B$ is a subset $N$ of $B \setminus B_{\rm max}$
satisfying the following conditions:
\begin{enumerate}
\item If $I, J \in N$, then we have either
$I \subset J$ or $J \subset I$ or $I \cap J=\emptyset$.
\item For any integer $k \geq 2$ and for any pairwise disjoint $I_1, \ldots, I_k \in N$,
we have $I_1 \cup \cdots \cup I_k \notin B$.
\end{enumerate}
\end{definition}

The set $\mathcal{N}(B)$ of all nested sets of $B$ is called the {\it nested complex}.
$\mathcal{N}(B)$ is a simplicial complex on $B \setminus B_{\rm max}$.

\begin{proposition}[{\cite[Proposition 4.1]{Zelevinsky}}]\label{pure}
Let $B$ be a building set on $S$.
Then all maximal (by inclusion) nested sets of $B$
have the same cardinality $|S|-|B_{\rm max}|$.
In particular, if $B$ is connected,
then the cardinality of a maximal nested set of $B$ is $|S|-1$.
\end{proposition}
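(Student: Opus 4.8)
\noindent\emph{Proof proposal.} The plan is to reduce to the connected case and then induct on $|S|$. For the reduction I would use $B=\bigsqcup_{C\in B_{\rm max}}B|_C$: since each member of $B$ lies in exactly one $B$-component and conditions (1)--(2) of Definition~\ref{nested} never relate members of different components, a nested set $N$ of $B$ is exactly a disjoint union $\bigsqcup_C N_C$ of nested sets $N_C$ of the $B|_C$, and $N$ is maximal iff every $N_C$ is. Hence $|N|=\sum_C|N_C|$; as the $C\in B_{\rm max}$ partition $S$, it is enough to prove that a maximal nested set of a \emph{connected} building set on $S$ has exactly $|S|-1$ elements, and then $\sum_C(|C|-1)=|S|-|B_{\rm max}|$.

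So let $B$ be connected on $S$ and induct on $|S|$; the case $|S|=1$ is trivial since $\emptyset$ is the only nested set. For $|S|\ge 2$ let $N$ be a maximal nested set; it is nonempty, because each singleton lies in $B\setminus\{S\}$ and $\{\{i\}\}$ is nested. Let $I_1,\dots,I_m$ ($m\ge 1$) be the maximal members of $N$: by condition~(1) they are pairwise disjoint, and each is a proper subset of $S$ since $I_j\in B\setminus\{S\}$. Put $N_j=\{K\in N:K\subseteq I_j\}$, so $N=N_1\sqcup\dots\sqcup N_m$ with $I_j\in N_j$. The first step is the local claim that each $N_j\setminus\{I_j\}$ is a \emph{maximal} nested set of the connected building set $B|_{I_j}$ on $I_j$. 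To see this I would take a putative $J\subsetneq I_j$ in $B$ adjoinable to $N_j\setminus\{I_j\}$ inside $B|_{I_j}$ and verify that $N\cup\{J\}$ is then nested in $B$; condition~(1) is routine, and for condition~(2) one notes that a hypothetical disjoint family with union in $B$ must contain $J$, and then absorbs $I_j$ into that union using building set axiom~(1): this either produces a disjoint family in $N$ with union in $B$ (impossible) or confines the union to $B|_{I_j}$, where it is forbidden by the assumed nestedness there. Granting the claim, the induction hypothesis gives $|N_j|=|I_j|$, hence $|N|=\sum_{j=1}^m|I_j|=|I|$ with $I:=I_1\cup\dots\cup I_m$.

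It remains to prove $|S\setminus I|=1$; note $I\subsetneq S$ (by condition~(2) if $m\ge 2$, and because $I_1\subsetneq S$ if $m=1$). Suppose for contradiction $|S\setminus I|\ge 2$ and fix distinct $a,b\in S\setminus I$. Among the sets $\{a\}\cup\bigcup_{j\in J}I_j$ ($J\subseteq\{1,\dots,m\}$) that lie in $B$ --- $\{a\}$ is one --- choose $V=\{a\}\cup\bigcup_{j\in J^{*}}I_j$ with $J^{*}$ maximal. Then $V\neq S$ (else $S\setminus I=\{a\}$), and $V\notin N$ (since $a\notin I$, $V$ is neither a maximal member of $N$ nor a proper subset of one). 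Moreover $N\cup\{V\}$ is nested: for condition~(1), $V$ contains $I_j$ when $j\in J^{*}$ and is disjoint from $I_j$ when $j\notin J^{*}$, so it is comparable to or disjoint from every member of $N$; for condition~(2), a violating disjoint family must contain $V$, and absorbing into its union the $I_j$'s that meet it (via axiom~(1)) yields $\{a\}\cup\bigcup_{j\in J'}I_j\in B$ with $J'\supsetneq J^{*}$, contradicting the maximality of $J^{*}$. This contradicts the maximality of $N$, so $|S\setminus I|=1$ and $|N|=|I|=|S|-1$, completing the induction.

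I expect the real work to be the two ``condition~(2)'' verifications, i.e.\ checking that whenever one tries to enlarge a maximal nested set the building set axiom~(1) lets one ``absorb'' the maximal members $I_j$ and thereby force a contradiction. Everything else --- the component decomposition, the base case, and adding up the cardinalities --- is routine.
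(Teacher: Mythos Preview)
The paper does not prove this proposition; it is quoted from \cite{Zelevinsky} with a citation and no accompanying argument, so there is no in-paper proof to compare against. That said, your proposal is a correct, self-contained elementary proof.

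The reduction to the connected case via $B=\bigsqcup_{C\in B_{\rm max}}B|_C$ is straightforward, and the induction on $|S|$ goes through as you describe. Both ``condition~(2)'' verifications are sound. For the first (adding $J\subsetneq I_j$ to $N$): a violating pairwise disjoint family $J=K_1,K_2,\dots,K_r$ with union in $B$ either has every $K_i\subset I_j$, in which case all $K_i$ lie in $(N_j\setminus\{I_j\})\cup\{J\}$ and contradict nestedness there, or has some $K_i$ disjoint from $I_j$, in which case adjoining $I_j$ to the union via the building-set axiom yields pairwise disjoint members $I_j,K_{s+1},\dots,K_r$ of $N$ with union in $B$, contradicting nestedness of $N$. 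For the second (adding $V$ to $N$): any violating family must contain $V$, the remaining members sit inside various $I_{j'}$ with $j'\notin J^{*}$, and successively absorbing those $I_{j'}$ into the union gives $\{a\}\cup\bigcup_{j\in J'}I_j\in B$ with $J'\supsetneq J^{*}$, contradicting the maximality of $J^{*}$; the second point $b$ is used only to ensure $V\neq S$. One cosmetic remark: when invoking the induction hypothesis on $B|_{I_j}$ you should record explicitly that $|I_j|<|S|$, which holds because $I_j\in B\setminus\{S\}$.
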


First, suppose that $B$ is a connected building set on $S$.
Let $S=\{1, \ldots, n+1\}$.
We denote by $e_1, \ldots, e_n$ the standard basis for $\mathbb{R}^n$
and we put $e_{n+1}=-e_1-\cdots-e_n$.
For $I \subset S$, we denote $e_I=\sum_{i \in I}e_i$.
For $N \in \mathcal{N}(B)$, we denote by $\mathbb{R}_{\geq 0}N$
the $|N|$-dimensional cone $\sum_{I \in N}\mathbb{R}_{\geq 0}e_I$,
where $\mathbb{R}_{\geq 0}$ is the set of non-negative real numbers.
We define $\Delta(B)=\{\mathbb{R}_{\geq 0}N \mid N \in \mathcal{N}(B)\}$.
Then $\Delta(B)$ is a fan in $\mathbb{R}^n$
and thus we have an $n$-dimensional toric variety $X(\Delta(B))$.
If $B$ is not connected,
then we define $X(\Delta(B))=\prod_{C \in B_{\rm max}}X(\Delta(B|_C))$.

\begin{theorem}[{\cite[Corollary 5.2 and Theorem 6.1]{Zelevinsky}}]
Let $B$ be a building set.
Then the associated toric variety $X(\Delta(B))$ is nonsingular and projective.
\end{theorem}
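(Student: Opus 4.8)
The plan is to reduce to a connected building set, prove nonsingularity by checking that each maximal cone of $\Delta(B)$ is unimodular, and prove projectivity by realizing $\Delta(B)$ as the normal fan of a polytope. Since $X(\Delta(B))=\prod_{C\in B_{\rm max}}X(\Delta(B|_C))$ by definition, each $B|_C$ is a connected building set on $C$, and a finite product of nonsingular projective varieties is again nonsingular and projective, we may assume $B$ is connected on $S=\{1,\dots,n+1\}$. It is convenient to identify $\mathbb{Z}^{n}$ with $\mathbb{Z}^{n+1}/\mathbb{Z}(1,\dots,1)$; under this identification the ray generator $e_{I}$ is the class of the $0/1$ indicator vector $\mathbf{1}_{I}\in\mathbb{Z}^{n+1}$, and $e_{S}=0$. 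Consequently, for a subset $N$ of $B\setminus B_{\rm max}$ with $|N|=n$, the vectors $\{e_{I}\mid I\in N\}$ form a $\mathbb{Z}$-basis of $\mathbb{Z}^{n}$ if and only if $\{\mathbf{1}_{I}\mid I\in N\cup\{S\}\}$ is a $\mathbb{Z}$-basis of $\mathbb{Z}^{n+1}$, since $\mathbf{1}_{S}=(1,\dots,1)$.

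For nonsingularity it is enough to show that each maximal cone $\mathbb{R}_{\geq 0}N$ of $\Delta(B)$ is unimodular, because a face of a unimodular cone is unimodular and every cone of $\Delta(B)$ is a face of a maximal one (as $\mathcal{N}(B)$ is a simplicial complex). By Proposition \ref{pure}, a maximal nested set $N$ has $|N|=n$, so by the reformulation above it suffices to show that the $(n+1)\times(n+1)$ matrix $M$ with rows $\mathbf{1}_{I}$, $I\in T:=N\cup\{S\}$, has determinant $\pm1$. Ordered by inclusion, $T$ is a rooted tree with root $S$: any two elements of $N$ are nested or disjoint by Definition \ref{nested}(1), and each lies in $S$. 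Performing, from the root downward, the unimodular row operations $\mathrm{row}(I)\leftarrow\mathrm{row}(I)-\sum_{I'}\mathrm{row}(I')$ with $I'$ ranging over the children of $I$ in $T$ (which are pairwise disjoint subsets of $I$), the row of $I$ becomes $\mathbf{1}_{R_{I}}$, where $R_{I}:=I\setminus\bigcup\{I'\mid I'\text{ a child of }I\text{ in }T\}$. A short argument using only the tree structure shows that the sets $R_{I}$ $(I\in T)$ are pairwise disjoint (if $I\subsetneq I'$ in $T$ then $I$ sits inside a single child of $I'$, hence is disjoint from $R_{I'}$) and cover $S$ (each $j\in S$ lies in $R_{I_{0}}$ for $I_{0}$ the smallest member of $T$ containing $j$, which exists because the members of $T$ containing $j$ are totally ordered by Definition \ref{nested}(1)). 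Since $|T|=n+1=|S|$, each $R_{I}$ is a singleton, the transformed matrix is a permutation matrix, and $\det M=\pm1$. Thus $X(\Delta(B))$ is nonsingular. (Definition \ref{nested}(2) is used here only through Proposition \ref{pure}, which is what guarantees $|N|=n$.)

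For projectivity I would realize $\Delta(B)$ as the normal fan of a polytope; then $X(\Delta(B))$ is the projective toric variety of that polytope, and in particular $\Delta(B)$ is complete. The natural candidate is the nestohedron $P_{B}:=\sum_{I\in B}\Delta_{I}\subset\mathbb{R}^{n}$, where $\Delta_{I}:=\mathrm{conv}\{e_{i}\mid i\in I\}$ and the sum is a Minkowski sum (or $\sum_{I\in B}y_{I}\Delta_{I}$ with generic $y_{I}>0$). One checks that $P_{B}$ is $n$-dimensional because $B$ contains all singletons, and --- this is the technical heart --- that the normal fan of $P_{B}$ equals $\Delta(B)$: for a maximal nested set $N$, a generic linear functional from the relative interior of $\mathbb{R}_{\geq 0}N$ is maximized on $P_{B}$ at a single vertex $v_{N}$, namely the sum over $I\in B$ of the vertex of $\Delta_{I}$ where it is maximized, and $v_{N}$ has normal cone $\mathbb{R}_{\geq 0}N$; one then verifies that the $v_{N}$ exhaust the vertices of $P_{B}$ and that their adjacencies match the nested complex, so that no coarsening occurs. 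This is precisely the polyhedral realization of the nested complex (compare the constructions of Postnikov \cite{Postnikov} for graph associahedra and nestohedra and of Zelevinsky \cite{Zelevinsky}), and the building-set axioms are exactly what make this combinatorics succeed; I expect this matching to be the main obstacle. A polytope-free variant: since $\Delta(B)$ is simplicial, prescribing values $c_{I}$ on the rays $\mathbb{R}_{\geq 0}e_{I}$ determines a unique $\Delta(B)$-linear continuous function $h$ on $|\Delta(B)|$ (the linear pieces automatically agree on common walls), and $X(\Delta(B))$ is projective once the $c_{I}$ make $h$ strictly convex, i.e. once $c_{J'}<\sum_{I\in N}\lambda_{I}c_{I}$ across every wall $\mathbb{R}_{\geq 0}N\cap\mathbb{R}_{\geq 0}N'$, where $e_{J'}=\sum_{I\in N}\lambda_{I}e_{I}$ is the wall-crossing relation; consistency of this linear system is once more the polytopality of the nested complex, now phrased on the level of support functions.
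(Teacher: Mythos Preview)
The paper does not give its own proof of this theorem; it simply cites \cite[Corollary~5.2 and Theorem~6.1]{Zelevinsky}. Your sketch is essentially the standard argument found there (and in \cite{Postnikov} for the nestohedron realization), so in that sense you are aligned with what the paper invokes.

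One small but genuine oversight in the nonsingularity part: your parenthetical claim that Definition~\ref{nested}(2) is used ``only through Proposition~\ref{pure}'' is not correct, and the gap shows up exactly where you conclude that each $R_{I}$ is a singleton. From pairwise disjointness and $\bigcup_{I\in T}R_{I}=S$ you get $\sum_{I\in T}|R_{I}|=n+1$ with $n+1$ summands, but this forces $|R_{I}|=1$ for all $I$ only once you know $R_{I}\neq\emptyset$ for each $I$. That nonemptiness is precisely where axiom~(2) enters directly: if $I\in T$ has children $I'_{1},\dots,I'_{k}$ in the tree with $k\geq 2$ and $R_{I}=\emptyset$, then $I'_{1}\cup\dots\cup I'_{k}=I\in B$ with the $I'_{j}\in N$ pairwise disjoint, contradicting Definition~\ref{nested}(2); the cases $k\leq 1$ are ruled out because children are proper subsets and elements of $B$ are nonempty. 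With this one-line fix your determinant computation goes through.

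For projectivity you correctly name the nestohedron $P_{B}=\sum_{I\in B}\Delta_{I}$ and identify the matching of its normal fan with $\Delta(B)$ as the technical heart; that is exactly the content of the cited results in \cite{Zelevinsky} and \cite{Postnikov}, so deferring to them here is in keeping with the paper.
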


\begin{example}
Let $S=\{1, 2, 3\}$ and $B=\{\{1\}, \{2\}, \{3\}, \{2, 3\}, \{1, 2, 3\}\}$.
Then the nested complex $\mathcal{N}(B)$ is
\begin{align*}
&\{\emptyset, \{\{1\}\}, \{\{2\}\}, \{\{3\}\}, \{\{2, 3\}\},\\
&\{\{1\}, \{2\}\}, \{\{1\}, \{3\}\}, \{\{2\}, \{2, 3\}\}, \{\{3\}, \{2, 3\}\}\}.
\end{align*}
Hence we have the fan $\Delta(B)$ in Figure \ref{example}.
Therefore the corresponding toric variety $X(\Delta(B))$ is $\mathbb{P}^2$
blown-up at one point.
\begin{figure}[htbp]
\begin{center}
\includegraphics[width=5cm]{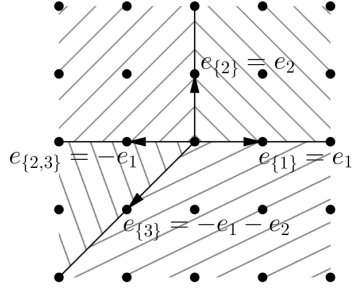}
\caption{the fan $\Delta(B)$.}
\label{example}
\end{center}
\end{figure}
\end{example}

Our first main result is the following:

\begin{theorem}\label{theorem1}
Let $B$ be a building set. Then the following are equivalent:
\begin{enumerate}
\item The associated nonsingular projective toric variety $X(\Delta(B))$ is Fano.
\item For any $B$-component $C$ and
for any $I_1, I_2 \in B|_C$ such that $I_1 \cap I_2 \ne \emptyset,
I_1 \not\subset I_2$ and $I_2 \not\subset I_1$,
we have $I_1 \cup I_2=C$ and $I_1 \cap I_2 \in B|_C$.
\end{enumerate}
\end{theorem}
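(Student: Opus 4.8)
The plan is to reduce to the connected case and then apply the standard criterion that a nonsingular complete toric variety is Fano if and only if every primitive collection of its fan has positive degree (Batyrev).

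Since $X(\Delta(B)) = \prod_{C \in B_{\rm max}} X(\Delta(B|_C))$, a product of nonsingular projective varieties is Fano precisely when every factor is, and condition (2) is already phrased componentwise, it suffices to prove the equivalence when $B$ is connected on $S=\{1,\ldots,n+1\}$ (so $C=S$). In this case the primitive ray generators of $\Delta(B)$ are exactly the vectors $e_I$ with $I \in B \setminus \{S\}$; a subset of $B \setminus \{S\}$ spans a cone of $\Delta(B)$ if and only if it is a nested set; and each maximal cone is unimodular because $X(\Delta(B))$ is nonsingular. I will also use the identity $e_A + e_B = e_{A \cap B} + e_{A \cup B}$ for all $A, B \subset S$ (immediate coordinatewise) together with $\sum_{i \in S} e_i = 0$, and the fact that, writing $\varphi$ for the support function of $-K_{X(\Delta(B))}$ (linear on each cone, with $\varphi(e_\rho)=1$ on rays), one has $\varphi(v) = \sum_J b_J$ whenever a lattice vector $v = \sum_J b_J e_J$ lies in the relative interior of a cone $\sum_J \mathbb{R}_{\geq 0} e_J$ with the $b_J$ positive integers; in particular, for lattice $v$, $\varphi(v)=0$ only if $v=0$, and $\varphi(v)=1$ exactly when $v$ is a primitive ray generator. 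With this notation $\sum_{v \in P} v$ lies in the relative interior of a unique cone of $\Delta(B)$ for any primitive collection $P$, the degree of $P$ equals $|P| - \varphi\!\left(\sum_{v \in P} v\right)$, and $X(\Delta(B))$ is Fano if and only if this is positive for every $P$.

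Next I would enumerate the primitive collections, i.e.\ the minimal subsets of $B \setminus \{S\}$ that are not nested sets. From Definition \ref{nested} these are of two kinds: (a) pairs $\{I_1,I_2\}$ with $I_1 \cap I_2 \neq \emptyset$, $I_1 \not\subset I_2$ and $I_2 \not\subset I_1$, which violate condition (1); and (b) families $\{I_1,\ldots,I_k\}$ of pairwise disjoint sets with $I_1 \cup \cdots \cup I_k \in B$ but no subfamily of size between $2$ and $k-1$ having union in $B$, which are the minimal violations of condition (2). For a type-(b) collection, $\sum_j e_{I_j} = e_U$ with $U = \bigcup_j I_j \in B$, so its degree is $k-1>0$ if $U \neq S$ and $k>0$ if $U=S$; hence type-(b) collections never obstruct the Fano property. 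For a type-(a) collection the degree is $2 - \varphi(e_{I_1}+e_{I_2}) = 2 - \varphi(e_{I_1 \cap I_2}+e_{I_1 \cup I_2})$, and $I_1 \cup I_2 \in B$ by the building-set axiom. The crux is the elementary claim that $e_{I_1 \cap I_2}+e_{I_1 \cup I_2}$ is never $0$ and is a primitive ray generator of $\Delta(B)$ if and only if $I_1 \cup I_2 = S$ and $I_1 \cap I_2 \in B$ (in which case it equals $e_{I_1 \cap I_2}$). Both halves follow by writing $e_{I_1 \cap I_2}+e_{I_1 \cup I_2} - e_J = \sum_{i \in S} c_i e_i$ and using that this vanishes in $\mathbb{R}^n$ exactly when the $c_i$ are all equal: comparing coordinates over $I_1 \cap I_2$, over $I_1 \triangle I_2$, and over $S \setminus (I_1 \cup I_2)$ forces a contradiction as soon as $I_1 \cup I_2 \neq S$, and when $I_1 \cup I_2 = S$ it forces $J = I_1 \cap I_2$. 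Granting this, a type-(a) collection has positive degree (in fact degree $1$) exactly when $I_1 \cup I_2 = S$ and $I_1 \cap I_2 \in B$, and degree $\leq 0$ otherwise. Since the type-(a) primitive collections are precisely the pairs figuring in condition (2), the criterion recalled above yields the theorem.

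The main obstacle I expect is not a single computation but assembling the bookkeeping cleanly: verifying that families (a) and (b) genuinely exhaust the minimal non-nested sets, checking in case (b) that no intermediate union can alter the primitive relation, and — on the ``only if'' direction — confirming via the coordinatewise argument that when $I_1 \cup I_2 \neq S$ or $I_1 \cap I_2 \notin B$ the vector $e_{I_1 \cap I_2} + e_{I_1 \cup I_2}$ really has $\varphi$-value at least $2$, so that the corresponding primitive collection has nonpositive degree.
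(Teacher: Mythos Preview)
Your approach is correct and genuinely different from the paper's. The paper works with the wall-crossing criterion: it computes intersection numbers $(-K.V(\tau))$ for all $(n-1)$-cones $\tau$, which requires knowing the two adjacent maximal nested sets sharing a given wall. Producing such walls explicitly when condition (2) fails is the content of the paper's rather intricate Lemma~\ref{keylemma}, proved by an induction on $|I_1\triangle I_2|$ with several case distinctions; the converse uses Zelevinsky's description (Proposition~\ref{pair}) of adjacent maximal nested sets. Your route via Batyrev's primitive-collection criterion bypasses all of this: once one observes that the minimal non-nested subsets of $B\setminus\{S\}$ are exactly the incomparable intersecting pairs (type~(a)) and the pairwise-disjoint families whose full union alone lies in $B$ (type~(b)), the primitive relations are immediate and the degrees drop out. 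The elementary claim you isolate---that $e_{I_1}+e_{I_2}$ equals a single generator $e_J$ only when $I_1\cup I_2=S$ and $J=I_1\cap I_2\in B$---is exactly what is needed, and the coordinate argument you sketch (values $2,1,0$ on $I_1\cap I_2$, $I_1\triangle I_2$, $S\setminus(I_1\cup I_2)$) does force $S\setminus(I_1\cup I_2)=\emptyset$ and then $J=I_1\cap I_2$; combined with smoothness (so $\varphi$ takes nonnegative integer values on lattice points) this yields $\varphi\geq 2$ and hence nonpositive degree in the bad case. One small point worth making explicit in a write-up: when $I_1\cap I_2\notin B$ you can also read off the primitive relation directly by decomposing $e_{I_1\cap I_2}=\sum_{C\in (B|_{I_1\cap I_2})_{\max}} e_C$ and checking that $\{I_1\cup I_2\}\cup (B|_{I_1\cap I_2})_{\max}$ is nested; this gives the degree as $1-|(B|_{I_1\cap I_2})_{\max}|$ (or $2-|(B|_{I_1\cap I_2})_{\max}|$ when $I_1\cup I_2=S$), matching the paper's computation without the indirect $\varphi\neq 0,1$ step. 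The trade-off is that the paper's argument is self-contained from the curve-intersection formula, whereas yours imports Batyrev's criterion but is substantially shorter and avoids the inductive construction of Lemma~\ref{keylemma} entirely.
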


\begin{example}\label{examples}
\begin{enumerate}
\item If $|S| \leq 3$, then a connected building set $B$ on $S$
is isomorphic to one of the following six types:
\begin{enumerate}
\item $\{\{1\}\}$: a point, which is understood to be Fano.
\item $\{\{1\}, \{2\}, \{1, 2\}\}$: $\mathbb{P}^1$.
\item $\{\{1\}, \{2\}, \{3\}, \{1, 2, 3\}\}$: $\mathbb{P}^2$.
\item $\{\{1\}, \{2\}, \{3\}, \{1, 2\}, \{1, 2, 3\}\}$: $\mathbb{P}^2$ blown-up at one point.
\item $\{\{1\}, \{2\}, \{3\}, \{1, 2\}, \{1, 3\}, \{1, 2, 3\}\}$:
$\mathbb{P}^2$ blown-up at two points.
\item $\{\{1\}, \{2\}, \{3\}, \{1, 2\}, \{1, 3\}, \{2, 3\}, \{1, 2, 3\}\}$:
$\mathbb{P}^2$ blown-up at three points.
\end{enumerate}
Thus $X(\Delta(B))$ is Fano in every case.
Since the disconnected building set $\{\{1\}, \{2\}, \{1, 2\}, \{3\}, \{4\}, \{3, 4\}\}$
yields $\mathbb{P}^1 \times \mathbb{P}^1$,
it follows that all toric Fano varieties of dimension $\leq 2$ are obtained from building sets.
\item Let $G$ be a finite simple graph, that is,
a finite graph with no loops and no multiple edges.
We denote by $V(G)$ and $E(G)$ its node set and edge set respectively.
For $I \subset V(G)$, we define a graph $G|_I$
by $V(G|_I)=I$ and $E(G|_I)=\{\{v, w\} \in E(G) \mid v, w \in I\}$.
The {\it graphical building set} $B(G)$ of $G$ is defined to be
$\{I \subset V(G) \mid G|_I \mbox{ is connected}, I \ne \emptyset\}$.
Theorem \ref{theorem1} implies that
the toric variety $X(\Delta(B(G)))$ is Fano if and only if
each connected component of $G$ has at most three nodes,
which agrees with \cite[Theorem 3.1]{Suyama}.
\item If $|S|=4$, then a connected building set $B$ on $S$
whose associated toric variety is Fano
is isomorphic to one of the following nine types:
\begin{enumerate}
\item \{\{1\}, \{2\}, \{3\}, \{4\}, \{1, 2, 3, 4\}\}.
\item \{\{1\}, \{2\}, \{3\}, \{4\}, \{1, 2, 3\}, \{1, 2, 3, 4\}\}.
\item \{\{1\}, \{2\}, \{3\}, \{4\}, \{1, 2\}, \{1, 2, 3, 4\}\}.
\item \{\{1\}, \{2\}, \{3\}, \{4\}, \{1, 2\}, \{3, 4\}, \{1, 2, 3, 4\}\}.
\item \{\{1\}, \{2\}, \{3\}, \{4\}, \{1, 2\}, \{1, 2, 3\}, \{1, 2, 3, 4\}\}.
\item \{\{1\}, \{2\}, \{3\}, \{4\}, \{3, 4\}, \{1, 2, 3\}, \{1, 2, 3, 4\}\}.
\item \{\{1\}, \{2\}, \{3\}, \{4\}, \{1, 2\}, \{3, 4\}, \{1, 2, 3\}, \{1, 2, 3, 4\}\}.
\item \{\{1\}, \{2\}, \{3\}, \{4\}, \{1, 2\}, \{1, 2, 3\}, \{1, 2, 4\}, \{1, 2, 3, 4\}\}.
\item \{\{1\}, \{2\}, \{3\}, \{4\}, \{1, 2\}, \{3, 4\}, \{1, 2, 3\}, \{1, 2, 4\}, \{1, 2, 3, 4\}\}.
\end{enumerate}
Among 18 types of toric Fano threefolds,
13 types are indecomposable and five types are products of
$\mathbb{P}^1$ and toric del Pezzo surfaces (see, for example \cite[pp.90--92]{Oda}).
This shows that there are nine types of indecomposable toric Fano threefolds
that are obtained from building sets.
On the other hand, (1) shows that all toric del Pezzo surfaces
are obtained from building sets.
Thus there are exactly 14 types of toric Fano threefolds
that are obtained from building sets.
\end{enumerate}
\end{example}

\section{Proof of Theorem \ref{theorem1}}

We recall a description of the intersection number of the anticanonical divisor
with a torus-invariant curve, see \cite{Oda} for details.
For a nonsingular complete fan $\Delta$ in $\mathbb{R}^n$ and $0 \leq r \leq n$,
we denote by $\Delta(r)$ the set of $r$-dimensional cones of $\Delta$.
We denote by $X(\Delta)$ the associated toric variety.
For $\tau \in \Delta(n-1)$,
the intersection number of the anticanonical divisor $-K_{X(\Delta)}$ with
the torus-invariant curve $V(\tau)$ corresponding to $\tau$
can be computed as follows:

\begin{proposition}\label{intersectionnumber}
Let $X(\Delta)$ be an $n$-dimensional nonsingular complete toric variety
and $\tau=\mathbb{R}_{\geq 0}v_1+\cdots+\mathbb{R}_{\geq 0}v_{n-1} \in \Delta(n-1)$,
where $v_1, \ldots, v_{n-1}$ are primitive vectors in $\mathbb{Z}^n$.
Let $v$ and $v'$ be the distinct primitive vectors in $\mathbb{Z}^n$ such that
$\tau+\mathbb{R}_{\geq 0}v$ and $\tau+\mathbb{R}_{\geq 0}v'$ are in $\Delta(n)$.
Then there exist integers $a_1, \ldots, a_{n-1}$ such that
$v+v'+a_1v_1+\cdots+a_{n-1}v_{n-1}=0$.
The intersection number $(-K_{X(\Delta)}.V(\tau))$ is equal to
$2+a_1+\cdots+a_{n-1}$.
\end{proposition}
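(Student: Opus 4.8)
The plan is to establish Proposition~\ref{intersectionnumber}, which is the standard toric formula for the intersection number $(-K_{X(\Delta)}.V(\tau))$; I would treat it as a self-contained consequence of toric intersection theory. First I would recall that on a nonsingular complete toric variety, $-K_{X(\Delta)} = \sum_{\rho \in \Delta(1)} D_\rho$, where $D_\rho$ is the torus-invariant prime divisor corresponding to the ray $\rho$. Fix $\tau \in \Delta(n-1)$ and let $\sigma = \tau + \mathbb{R}_{\geq 0}v$ and $\sigma' = \tau + \mathbb{R}_{\geq 0}v'$ be the two maximal cones containing it; these correspond to the two torus-fixed points on the curve $V(\tau) \cong \mathbb{P}^1$. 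Because $\sigma$ and $\sigma'$ together with $\tau$ span $\mathbb{R}^n$ and $v_1, \dots, v_{n-1}$ is part of a $\mathbb{Z}$-basis (by nonsingularity of $\tau$), the vectors $v$ and $v'$ lie on opposite sides of the hyperplane $\mathbb{R}\tau$, so the wall-crossing relation $v + v' + a_1 v_1 + \cdots + a_{n-1}v_{n-1} = 0$ holds for unique integers $a_i$; I would note uniqueness follows since $v_1, \dots, v_{n-1}$ are linearly independent, and integrality from nonsingularity of one of the maximal cones.

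Next I would compute the individual intersection numbers $(D_\rho . V(\tau))$ for each ray $\rho \in \Delta(1)$. The key facts are: if $\rho$ is not a ray of $\sigma$ or $\sigma'$ (equivalently, $\rho \notin \{\rho_1, \dots, \rho_{n-1}, \mathbb{R}_{\geq 0}v, \mathbb{R}_{\geq 0}v'\}$ where $\rho_j = \mathbb{R}_{\geq 0}v_j$), then $D_\rho \cap V(\tau) = \emptyset$ and the intersection number is $0$; if $\rho = \mathbb{R}_{\geq 0}v$ or $\rho = \mathbb{R}_{\geq 0}v'$, then $D_\rho$ meets $V(\tau)$ transversally in one point, giving intersection number $1$; and if $\rho = \rho_j$ for some $j$, then $(D_{\rho_j} . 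V(\tau)) = a_j$, which is precisely the content of the wall-crossing relation together with the projection/self-intersection computation on the $\mathbb{P}^1$-bundle structure transverse to $V(\tau)$. Summing over all rays gives
\begin{align*}
(-K_{X(\Delta)}.V(\tau)) = \sum_{\rho \in \Delta(1)} (D_\rho . V(\tau)) = 1 + 1 + a_1 + \cdots + a_{n-1} = 2 + a_1 + \cdots + a_{n-1}.
\end{align*}

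For the key identity $(D_{\rho_j}.V(\tau)) = a_j$, I would invoke the standard local computation: after a unimodular change of coordinates one may assume $v_j = e_j$ for $j = 1, \dots, n-1$, so $\tau$ is spanned by $e_1, \dots, e_{n-1}$ and $V(\tau)$ is the closure of the one-parameter subgroup in the $e_n$-direction; the relation then reads $v + v' = -\sum a_j e_j$ with $v, v'$ having $n$-th coordinates $+1$ and $-1$ respectively (after rescaling, which is permissible since $v, v'$ are primitive and the quotient lattice is rank one). The intersection numbers $(D_{\rho_j}.V(\tau))$ are then read off from the two chart transition, reproducing the classical formula for intersection with a torus-invariant curve on a smooth toric variety. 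I expect the main obstacle — really the only nontrivial point — to be stating this local normalization cleanly and justifying that the $n$-th coordinates of $v$ and $v'$ can be normalized to $\pm 1$; everything else is bookkeeping with the fan. Since this proposition is classical (it is essentially in Oda's book, which the paper already cites for this purpose), I would keep the exposition brief and cite \cite{Oda} for the routine verifications, spelling out only enough to make the sign conventions and the role of nonsingularity transparent.
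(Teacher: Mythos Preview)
Your proposal is correct, and indeed it goes further than the paper itself: the paper does not prove Proposition~\ref{intersectionnumber} at all but merely states it as a recalled fact, referring the reader to \cite{Oda} for details. Your sketch of the wall-crossing relation and the ray-by-ray computation of $(D_\rho.V(\tau))$ is the standard argument behind this formula and is consistent with what one finds in Oda; since the paper treats the proposition as background with a citation, your plan to keep the exposition brief and cite \cite{Oda} matches the paper's treatment exactly.
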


\begin{proposition}\label{Fano}
Let $X(\Delta)$ be an $n$-dimensional nonsingular complete toric variety.
Then $X(\Delta)$ is Fano if and only if
$(-K_{X(\Delta)}.V(\tau))$ is positive for every $\tau \in \Delta(n-1)$.
\end{proposition}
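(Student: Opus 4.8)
The plan is to deduce this from two standard facts about complete toric varieties: the description of the cone of curves in terms of torus-invariant curves, and Kleiman's ampleness criterion. Since $X(\Delta)$ is nonsingular, every torus-invariant Weil divisor is Cartier, so $-K_{X(\Delta)}$ is a Cartier divisor (linearly equivalent to the sum $\sum_{\rho \in \Delta(1)} D_\rho$ of the torus-invariant prime divisors), and by definition $X(\Delta)$ is Fano exactly when this divisor is ample. Thus the proposition follows once one knows that a torus-invariant Cartier divisor $D$ on a nonsingular complete toric variety is ample if and only if $(D \cdot V(\tau)) > 0$ for every $\tau \in \Delta(n-1)$; applying this with $D = -K_{X(\Delta)}$ then gives the statement.

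For that equivalence I would first invoke Reid's toric cone theorem: the closed cone of curves $\overline{NE}(X(\Delta))$ is a rational polyhedral cone generated by the classes of the torus-invariant curves $V(\tau)$, $\tau \in \Delta(n-1)$ (see \cite{Oda}). Granting this, Kleiman's criterion says that $D$ is ample if and only if it is strictly positive on $\overline{NE}(X(\Delta)) \setminus \{0\}$; since this cone is generated by the finitely many classes $[V(\tau)]$ and intersection with $D$ is linear, positivity on the whole punctured cone is equivalent to positivity on those generators, i.e. to $(D \cdot V(\tau)) > 0$ for all walls $\tau$.

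A more hands-on alternative, avoiding the cone theorem, is to work with support functions: $D$ corresponds to a piecewise-linear function $h_D$ on $\mathbb{R}^n$ that is linear on each maximal cone, and $D$ is ample if and only if $h_D$ is strictly convex. One then checks, wall by wall, that strict convexity of $h_D$ across $\tau \in \Delta(n-1)$ — with adjacent maximal cones $\tau + \mathbb{R}_{\geq 0} v$ and $\tau + \mathbb{R}_{\geq 0} v'$ and the relation $v + v' + a_1 v_1 + \cdots + a_{n-1} v_{n-1} = 0$ — is exactly the condition $(D \cdot V(\tau)) > 0$, using the formula of Proposition \ref{intersectionnumber} in the anticanonical case and its evident analogue for an arbitrary $D$.

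I expect the main obstacle to be the input facts rather than the bookkeeping: the toric cone theorem (in the first approach) or the precise sign dictionary between strict convexity of $h_D$ at a wall and the sign of the intersection number (in the second). Both are classical, so in the paper it should suffice to cite \cite{Oda} for whichever formulation is used; after that, the reduction to $D = -K_{X(\Delta)}$ together with Proposition \ref{intersectionnumber} is immediate.
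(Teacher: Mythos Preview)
Your argument is correct: both routes you outline (Kleiman's criterion together with the toric cone theorem, or the support-function/strict-convexity characterization of ampleness) are the standard ways to establish this fact, and either one yields the proposition after specializing to $D=-K_{X(\Delta)}$.

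However, there is nothing to compare against: the paper does not supply its own proof of Proposition~\ref{Fano}. It is stated as a known result, with the surrounding sentence ``see \cite{Oda} for details'' serving as the reference for both Propositions~\ref{intersectionnumber} and~\ref{Fano}. So your proposal is not an alternative to the paper's proof but rather a fleshing-out of the citation. If anything, your write-up is more detailed than what the paper needs; in context, a single line pointing to the toric Nakai criterion or the toric Kleiman criterion in \cite{Oda} would suffice.
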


Let $B$ be a building set on $S$.
For $C \in B \setminus B_{\rm max}$, we call
\begin{equation*}
\mathcal{N}(B)_C=\{N \subset (B \setminus B_{\rm max}) \setminus \{C\} \mid
N \cup \{C\} \in \mathcal{N}(B)\}
\end{equation*}
the {\it link} of $C$ in $\mathcal{N}(B)$. $\mathcal{N}(B)_C$ is a simplicial complex on
\begin{equation*}
\{I \in (B \setminus B_{\rm max}) \setminus \{C\} \mid
\{I, C\} \in \mathcal{N}(B)\}.
\end{equation*}
For a nonempty proper subset $C$ of $S$, we call
\begin{equation*}
C \setminus B=\{I \subset S \setminus C \mid I \ne \emptyset;
I \in B \mbox{ or } C \cup I \in B\}
\end{equation*}
the {\it contraction} of $C$ from $B$.
$C \setminus B$ is a building set on $S \setminus C$.

\begin{proposition}[{\cite[Proposition 3.2]{Zelevinsky}}]\label{link}
Let $B$ be a building set on $S$ and let $C \in B \setminus B_{\rm max}$.
Then the correspondence
\begin{equation*}
I \mapsto \left\{\begin{array}{ll}
I \setminus C & (C \subset I), \\
I & (C \not\subset I) \end{array}\right.
\end{equation*}
induces an isomorphism
$\mathcal{N}(B)_C \rightarrow \mathcal{N}(B|_C \cup (C \setminus B))$
of simplicial complexes.
\end{proposition}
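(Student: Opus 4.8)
The plan is to write down the inverse correspondence explicitly and verify that both maps are simplicial isomorphisms. Put $B'=B|_C\cup(C\setminus B)$ for the target building set and let $\varphi$ denote the map in the statement. The first things to record are the structural facts that make the whole argument local: every element of $B'$ lies either in $C$ or in $S\setminus C$; the set $C$ is a maximal element of $B'$; and the other maximal elements of $B'$ are the $B$-components different from the one containing $C$, together with that component with $C$ removed, so that $|B'_{\mathrm{max}}|=|B_{\mathrm{max}}|+1$ (which, via Proposition~\ref{pure}, makes the maximal faces of $\mathcal{N}(B')$ and of $\mathcal{N}(B)_C$ have the same cardinality --- a consistency check, not part of the proof). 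Unwinding the definition of the link, the statement to prove becomes: for every $N\subset(B\setminus B_{\mathrm{max}})\setminus\{C\}$ we have $N\cup\{C\}\in\mathcal{N}(B)$ if and only if $\varphi(N)\in\mathcal{N}(B')$, the map $\varphi$ is injective on each such $N$, and every face of $\mathcal{N}(B')$ is of the form $\varphi(N)$. I would establish all of this simultaneously by introducing the candidate inverse $\psi$: for $J\subset C$ it sends $J$ to $J$, and for $J\subset S\setminus C$ it sends $J$ to $C\cup J$ if $C\cup J\in B$ and to $J$ otherwise.

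The steps are: (i) show $\varphi$ and $\psi$ are well defined on the relevant vertex sets and mutually inverse there --- for $\varphi$ this reduces to noting that a vertex $I$ of $\mathcal{N}(B)_C$ either properly contains $C$, lies properly inside $C$, or is disjoint from $C$ with $I\cup C\notin B$, and that $\varphi(I)\in B'\setminus B'_{\mathrm{max}}$ in each case, the last being the one where nonmaximality genuinely uses $I\cup C\notin B$; (ii) show that $N\cup\{C\}\in\mathcal{N}(B)$ implies $\varphi(N)\in\mathcal{N}(B')$ by checking conditions (1) and (2) of Definition~\ref{nested} for $\varphi(N)$, after splitting $N$ into the members containing $C$ (which form a chain, since any two of them meet in $C$), those properly inside $C$, and those disjoint from $C$; (iii) show that $M\in\mathcal{N}(B')$ implies $\psi(M)\cup\{C\}\in\mathcal{N}(B)$ by the same kind of case split applied to $\psi$; and (iv) read off from (i)--(iii) that $\varphi$ is an isomorphism of simplicial complexes. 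Condition (1) of Definition~\ref{nested} is the easy half: both $\varphi$ and $\psi$ preserve inclusions among sets containing $C$, and every remaining pair has one member inside $C$ and the other inside $S\setminus C$, hence disjoint and with union not in $B'$.

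The crux is condition (2) of Definition~\ref{nested}, together with the interplay between the ``or'' in the definition of the contraction and unions straddling $C$ and $S\setminus C$. In step (iii), say, one takes pairwise disjoint $T_1,\dots,T_m$ with $m\ge 2$ among the elements of $\psi(M)\cup\{C\}$, supposes $T_1\cup\dots\cup T_m\in B$, and must contradict $M\in\mathcal{N}(B')$. At most one $T_i$ contains $C$; after separating that one off (it is $C$ itself, or $C\cup J_1$ with $C\cup J_1\in B$, and in either case the remaining $T_i$ are forced to lie in $S\setminus C$) one has to convert the hypothesized union in $B$ into a union of pairwise disjoint elements of $M$ lying in $B'$. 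The mechanism that accomplishes this is that $C\in B$: a subset of $S$ meeting $C$ can be enlarged by $C$ within $B$, so a union meeting $C$ produces $C\cup W\in B$, where $W$ is the part lying in $S\setminus C$; hence $W\in C\setminus B\subseteq B'$, and if $W$ is assembled from at least two of the $T_i$ this contradicts condition (2) of Definition~\ref{nested} for $M$, while if $W$ comes from a single element it contradicts the fact that $\psi$ left that element unchanged, which by definition required its union with $C$ to lie outside $B$. The mirror image of this device handles the corresponding point in step (ii). I expect the only real obstacle to be this reorganization of condition (2) into the ``all inside $C$'', ``all inside $S\setminus C$'', and ``mixed'' cases --- and, in particular, ruling out genuine mixed unions in $B$ by invoking $C\in B$; once that is done, the isomorphism follows formally.
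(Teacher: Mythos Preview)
The paper does not supply its own proof of this proposition: it is quoted verbatim as \cite[Proposition 3.2]{Zelevinsky} and used as a black box in the proof of Lemma~\ref{keylemma}(2). So there is nothing in the present paper to compare your argument against.

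That said, your plan is sound and is essentially the standard argument. The inverse map $\psi$ you wrote down is the correct one, and your trichotomy of vertices of $\mathcal{N}(B)_C$ into ``properly contains $C$'', ``properly inside $C$'', and ``disjoint from $C$ with union not in $B$'' is exactly what drives the verification. The only place I would tighten the exposition is your parenthetical ``which form a chain, since any two of them meet in $C$'': in step~(ii) this is indeed immediate from condition~(1) of Definition~\ref{nested} for $N\cup\{C\}$, but in step~(iii) the analogous chain property for the sets $\psi(J)=C\cup J$ does not follow just from their meeting in $C$; you need the extra observation that if $J_1,J_2\in M$ are disjoint with $C\cup J_1,\,C\cup J_2\in B$, then $C\cup J_1\cup J_2\in B$ by the building-set axiom, hence $J_1\cup J_2\in C\setminus B\subset B'$, contradicting condition~(2) for $M$. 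You essentially say this later when discussing the ``mixed'' case, so it is only a matter of placing the remark where it is first needed. With that adjustment the sketch goes through.
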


The {\it symmetric difference} of two sets $X$ and $Y$
is defined by $X \triangle Y=(X \cup Y) \setminus (X \cap Y)$.
The following is the key lemma.

\begin{lemma}\label{keylemma}
Let $B$ be a connected building set on $S$
and let $I_1, I_2 \in B$ with $I_1 \cap I_2 \ne \emptyset, I_1 \not\subset I_2$
and $I_2 \not\subset I_1$. Then the following hold:
\begin{enumerate}
\item There exist $J_1, J_2 \in B$ with
$J_1 \cap J_2 \ne \emptyset$ and $J_1 \cup J_2 \subset I_1 \cup I_2$,
$j_1 \in J_1 \setminus J_2, j_2 \in J_2 \setminus J_1$,
a maximal nested set $N$ of $B|_{J_1 \cap J_2}$ and 
a maximal nested set $N'$ of $B|_{(J_1 \triangle J_2) \setminus \{j_1, j_2\}}$ such that
\begin{equation}\label{J_k}
\{J_k\} \cup N \cup (B|_{J_1 \cap J_2})_{\rm max} \cup N' \cup
(B|_{(J_1 \triangle J_2) \setminus \{j_1, j_2\}})_{\rm max}
\end{equation}
are nested sets of $B$ for $k=1, 2$.
If $I_1 \cap I_2 \notin B$, then we can choose $J_1, J_2 \in B$
so that $J_1 \cap J_2 \notin B$ or $J_1 \cup J_2 \subsetneq I_1 \cup I_2$.
\item Furthermore, if $J_1 \cup J_2 \subsetneq S$,
then there exists a nested set $N''$ of $B$ such that
\begin{equation*}
\{J_k, J_1 \cup J_2\} \cup N \cup (B|_{J_1 \cap J_2})_{\rm max} \cup N' \cup
(B|_{(J_1 \triangle J_2) \setminus \{j_1, j_2\}})_{\rm max} \cup N''
\end{equation*}
are maximal nested sets of $B$ for $k=1, 2$ ($N''$ can be empty).
\end{enumerate}
If $J_1 \triangle J_2=\{j_1, j_2\}$,
then $N'$ and $(B|_{(J_1 \triangle J_2) \setminus \{j_1, j_2\}})_{\rm max}$
are understood to be empty.
\end{lemma}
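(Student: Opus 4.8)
\emph{Overall strategy.} The plan is to deduce part (2) from part (1) using Proposition \ref{link}, and to prove part (1) by choosing $J_1,J_2$ to be a minimal ``bad pair'' inside $I_1\cup I_2$. For the reduction, assume part (1) holds and $J_1\cup J_2\subsetneq S$. Since $B$ is connected, $J_1\cup J_2\in B\setminus B_{\rm max}$, and from $S=(J_1\cup J_2)\cup(S\setminus(J_1\cup J_2))\in B$ one sees that $(J_1\cup J_2)\setminus B$ is a connected building set on $S\setminus(J_1\cup J_2)$. By Proposition \ref{pure} the set $\tilde N_k:=\{J_k\}\cup N\cup(B|_{J_1\cap J_2})_{\rm max}\cup N'\cup(B|_{(J_1\triangle J_2)\setminus\{j_1,j_2\}})_{\rm max}$ has cardinality $|J_1\cup J_2|-1$, and since each of its members is a proper subset of $J_1\cup J_2$ it is a maximal nested set of the connected building set $B|_{J_1\cup J_2}$. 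Fixing a maximal nested set $N_0''$ of $(J_1\cup J_2)\setminus B$, the union $\tilde N_k\sqcup N_0''$ is (again by Proposition \ref{pure}) a maximal nested set of $B|_{J_1\cup J_2}\cup((J_1\cup J_2)\setminus B)$; pulling it back along the isomorphism of Proposition \ref{link} for $C=J_1\cup J_2$ yields a maximal nested set of $B$, which — because every member of $\tilde N_k$ is contained in $C$, hence fixed by the isomorphism — has exactly the shape required in (2), with $N''$ the image of $N_0''$ and therefore independent of $k$. Thus it suffices to prove part (1).

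\emph{Choice of the pair and absence of straddling.} Let $\mathcal P$ be the set of pairs $(K_1,K_2)$ of elements of $B$ with $K_1\cap K_2\neq\emptyset$, $K_1\not\subseteq K_2$, $K_2\not\subseteq K_1$ and $K_1\cup K_2\subseteq I_1\cup I_2$; if $I_1\cap I_2\notin B$, replace $\mathcal P$ by its subset of pairs that moreover satisfy $K_1\cap K_2\notin B$ or $K_1\cup K_2\subsetneq I_1\cup I_2$. In either case $(I_1,I_2)\in\mathcal P$. Choose $(J_1,J_2)\in\mathcal P$ with $|J_1\cup J_2|$ as small as possible and, subject to that, with $|J_1\cap J_2|$ as large as possible; pick any $j_1\in J_1\setminus J_2$ and $j_2\in J_2\setminus J_1$, and set $W_1=(J_1\setminus J_2)\setminus\{j_1\}$, $W_2=(J_2\setminus J_1)\setminus\{j_2\}$. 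If some $K\in B$ with $K\subseteq W_1\cup W_2$ met both $W_1$ and $W_2$, then $K\cap J_1\neq\emptyset$, so $(J_1,K)\in\mathcal P$ (it is incomparable, and from $K\cup J_1\subseteq(J_1\cup J_2)\setminus\{j_2\}$ one even gets membership in the restricted $\mathcal P$); this contradicts the minimality of $|J_1\cup J_2|$. Hence $B|_{W_1\cup W_2}=B|_{W_1}\sqcup B|_{W_2}$, so every member of $B|_{(J_1\triangle J_2)\setminus\{j_1,j_2\}}$ is contained in a single wing.

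\emph{$\tilde N_k$ is a nested set, and the refinement.} Let $N$ and $N'$ be any maximal nested sets of $B|_{J_1\cap J_2}$ and $B|_{(J_1\triangle J_2)\setminus\{j_1,j_2\}}$. Each member of $N\cup(B|_{J_1\cap J_2})_{\rm max}$ lies in $J_1\cap J_2\subseteq J_k$, and by the previous step each member of $N'\cup(B|_{(J_1\triangle J_2)\setminus\{j_1,j_2\}})_{\rm max}$ lies in a single wing and is hence contained in or disjoint from $J_k$; this gives the nesting condition of Definition \ref{nested}. For the antichain condition one first records the auxiliary fact that whenever $B_0=B|_T$ with $T$ a proper subset of $S$ and $N_0$ is a maximal nested set of $B_0$, the set $N_0\cup(B_0)_{\rm max}$ is a nested set of $B$ (because $N_0$ is nested in $B_0$, the maximal elements of $B_0$ are pairwise disjoint, and no proper sub-union of them lies in $B_0$); this disposes of every union of pairwise disjoint members of $\tilde N_k$ lying entirely inside $J_1\cap J_2$ or entirely inside $W_1\cup W_2$. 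If such a union $E$ meets both $J_1\cap J_2$ and $W_1\cup W_2$, then one of $(E,J_1),(E,J_2)$ lies in $\mathcal P$ with union a proper subset of $J_1\cup J_2$, again contradicting minimality; and a union of the form $E=J_k\sqcup Q$, with $Q$ a union of members of the wing opposite to $J_k$, satisfies $Q\subseteq J_{3-k}$, so $(E,J_{3-k})\in\mathcal P$ has the same union $J_1\cup J_2$ but intersection $(J_1\cap J_2)\cup Q\supsetneq J_1\cap J_2$, contradicting maximality of $|J_1\cap J_2|$. (One must check that these auxiliary pairs genuinely lie in $\mathcal P$; this uses either $J_1\cup J_2\subsetneq I_1\cup I_2$ or, when $I_1\cap I_2\notin B$, that the new intersection is forced out of $B$ by the same minimality.) Finally, the refinement when $I_1\cap I_2\notin B$ holds because $(J_1,J_2)$ was taken from the restricted $\mathcal P$.

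\emph{Main obstacle.} I expect the antichain verification to be the real work: one must exclude from $B$ every union of two or more pairwise disjoint members of $\tilde N_k$, splitting into the configurations ``inside $J_1\cap J_2$'', ``inside $W_1\cup W_2$'', ``genuinely mixed'', and ``containing $J_k$'', and reducing each to the minimality of $(J_1,J_2)$ in the order ``smaller union, then larger intersection''; throughout one must confirm that the auxiliary pairs produced really belong to $\mathcal P$ (and to its restricted version when $I_1\cap I_2\notin B$), and one must handle the degenerate cases in which a wing is empty — where $N'$ and $(B|_{(J_1\triangle J_2)\setminus\{j_1,j_2\}})_{\rm max}$ are empty, as the statement permits.
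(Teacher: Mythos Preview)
Your reduction of (2) to (1) via Proposition \ref{link} is the same as the paper's. For (1), however, your argument is genuinely different. The paper proceeds by induction on $|I_1\triangle I_2|$: it tentatively sets $J_k=I_k$, and if the nested condition fails it produces (via an explicit three–case analysis of how Definition \ref{nested} is violated) a new pair $I'_1,I'_2$ with $I'_1\cup I'_2=I_1\cup I_2$ but $I'_1\cap I'_2\supsetneq I_1\cap I_2$, hence strictly smaller symmetric difference; the refinement for $I_1\cap I_2\notin B$ is handled by a separate pass producing $I''_1,I''_2$ with strictly smaller union. You instead make a single extremal choice of $(J_1,J_2)$ in a set $\mathcal P$ of admissible pairs (with the refinement folded into the definition of $\mathcal P$), minimizing $|J_1\cup J_2|$ and then maximizing $|J_1\cap J_2|$, and verify the nested conditions directly from minimality. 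This trades the paper's case analysis of failures for your case analysis of antichain unions, and is roughly comparable in length; the payoff is that the $I_1\cap I_2\notin B$ refinement comes for free rather than via a second induction. The one point that needs to be spelled out is your Case ``containing $J_k$'' when $I_1\cap I_2\notin B$ and $J_1\cup J_2=I_1\cup I_2$: to place $(E,J_{3-k})$ in the restricted $\mathcal P$ you must show $E\cap J_{3-k}=(J_1\cap J_2)\cup Q\notin B$, and the argument is that otherwise the pair $((J_1\cap J_2)\cup Q,\,J_k)$ lies in $\mathcal P$ with union $J_k\cup Q\subsetneq J_1\cup J_2$, contradicting minimality of $|J_1\cup J_2|$. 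Your parenthetical gestures at exactly this, so the strategy is sound; just make that step explicit.
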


\begin{proof}
(1) We use induction on $|I_1 \triangle I_2|$.
We have $|I_1 \triangle I_2| \geq 2$.
Suppose $|I_1 \triangle I_2|=2$. We put $J_1=I_1$ and $J_2=I_2$.
Clearly $J_1 \cap J_2 \ne \emptyset$ and $J_1 \cup J_2 \subset I_1 \cup I_2$.
We choose any maximal nested set $N$ of $B|_{J_1 \cap J_2}$. Then
$\{J_1\} \cup N \cup (B|_{J_1 \cap J_2})_{\rm max}$ and
$\{J_2\} \cup N \cup (B|_{J_1 \cap J_2})_{\rm max}$ are nested sets of $B$.
If $I_1 \cap I_2 \notin B$, then $J_1 \cap J_2 \notin B$.

Suppose $|I_1 \triangle I_2| \geq 3$.
We choose $i_1 \in I_1 \setminus I_2, i_2 \in I_2 \setminus I_1$,
and maximal nested sets $N$ and $N'$ of $B|_{I_1 \cap I_2}$ and 
$B|_{(I_1 \triangle I_2) \setminus \{i_1, i_2\}}$, respectively. If
\begin{equation*}
\{I_k\} \cup N \cup (B|_{I_1 \cap I_2})_{\rm max} \cup N' \cup
(B|_{(I_1 \triangle I_2) \setminus \{i_1, i_2\}})_{\rm max}
\end{equation*}
are nested sets of $B$ for $k=1, 2$, then there is nothing to prove.
Without loss of generality, we may assume that
\begin{equation}\label{I_k}
\{I_1\} \cup N \cup (B|_{I_1 \cap I_2})_{\rm max} \cup N' \cup
(B|_{(I_1 \triangle I_2) \setminus \{i_1, i_2\}})_{\rm max}
\end{equation}
is not a nested set of $B$.
We find $I'_1, I'_2 \in B$ satisfying $I_1 \cap I_2 \subsetneq I'_1 \cap I'_2,
I'_1 \not\subset I'_2, I'_2 \not\subset I'_1$ and $I'_1 \cup I'_2=I_1 \cup I_2$ as follows:

{\it Case 1}. Suppose that (\ref{I_k})
does not satisfy the condition (1) in Definition \ref{nested}.
$\{I_1\} \cup N \cup (B|_{I_1 \cap I_2})_{\rm max}$ and
$N' \cup (B|_{(I_1 \triangle I_2) \setminus \{i_1, i_2\}})_{\rm max}$ are nested sets.
For any $K \in N \cup (B|_{I_1 \cap I_2})_{\rm max}$
and $L \in N' \cup (B|_{(I_1 \triangle I_2) \setminus \{i_1, i_2\}})_{\rm max}$,
we have $K \cap L=\emptyset$. Hence there exists
$L \in N' \cup (B|_{(I_1 \triangle I_2) \setminus \{i_1, i_2\}})_{\rm max}$
such that $I_1 \not\subset L, L \not\subset I_1$ and $I_1 \cap L \ne \emptyset$.
Then $I_1 \cup L \in B$. We put $I'_1=I_1 \cup L$ and $I'_2=I_2$.
Since $L \subset I_1 \triangle I_2$,
it follows that $L \setminus I_1 \subset (I'_1 \cap I'_2) \setminus (I_1 \cap I_2)$.
Thus $I_1 \cap I_2 \subsetneq I'_1 \cap I'_2$.

{\it Case 2}. Suppose that (\ref{I_k})
does not satisfy the condition (2) in Definition \ref{nested},
and there exist
\begin{equation*}
K_1, \ldots, K_r \in N \cup (B|_{I_1 \cap I_2})_{\rm max},\quad
L_1, \ldots, L_s \in N' \cup (B|_{(I_1 \triangle I_2) \setminus \{i_1, i_2\}})_{\rm max}
\end{equation*}
for $r, s \geq 1$ such that $K_1, \ldots, K_r, L_1, \ldots, L_s$ are pairwise disjoint
and $K_1 \cup \cdots \cup K_r \cup L_1 \cup \cdots \cup L_s \in B$.
Then we have $I_k \cup L_1 \cup \cdots \cup L_s \in B$ for $k=1, 2$.
We put $I'_k=I_k \cup L_1 \cup \cdots \cup L_s$ for $k=1, 2$.
Since $L_1 \cup \cdots \cup L_s \subset I_1 \triangle I_2$,
we must have $I_1 \subsetneq I'_1$ or $I_2 \subsetneq I'_2$.
If $I_1 \subsetneq I'_1$, then it follows that
$I'_1 \setminus I_1 \subset (I'_1 \cap I'_2) \setminus (I_1 \cap I_2)$.
Thus $I_1 \cap I_2 \subsetneq I'_1 \cap I'_2$.
Similarly, $I_2 \subsetneq I'_2$ implies $I_1 \cap I_2 \subsetneq I'_1 \cap I'_2$.

{\it Case 3}. Suppose that (\ref{I_k})
does not satisfy the condition (2) in Definition \ref{nested},
and there exist
$L_1, \ldots, L_s \in N' \cup (B|_{(I_1 \triangle I_2) \setminus \{i_1, i_2\}})_{\rm max}$
such that $I_1, L_1, \ldots, L_s$ are pairwise disjoint
and $I_1 \cup L_1 \cup \cdots \cup L_s \in B$.
We put $I'_1=I_1 \cup L_1 \cup \cdots \cup L_s$ and $I'_2=I_2$.
Since $L_1 \cup \cdots \cup L_s \subset I_1 \triangle I_2$,
it follows that
$(L_1 \cup \cdots \cup L_s) \setminus I_1 \subset (I'_1 \cap I'_2) \setminus (I_1 \cap I_2)$.
Thus $I_1 \cap I_2 \subsetneq I'_1 \cap I'_2$.

In every case, we have $i_1 \in I'_1 \setminus I'_2, i_2 \in I'_2 \setminus I'_1$
and $I'_1 \cup I'_2=I_1 \cup I_2$.
Hence $|I'_1 \triangle I'_2|=|I'_1 \cup I'_2|-|I'_1 \cap I'_2|
<|I_1 \cup I_2|-|I_1 \cap I_2|=|I_1 \triangle I_2|$.
By the hypothesis of induction, there exist $J_1, J_2 \in B$  with
$J_1 \cap J_2 \ne \emptyset$ and $J_1 \cup J_2 \subset I'_1 \cup I'_2=I_1 \cup I_2$,
$j_1 \in J_1 \setminus J_2, j_2 \in J_2 \setminus J_1$,
a maximal nested set $N$ of $B|_{J_1 \cap J_2}$ and 
a maximal nested set $N'$ of $B|_{(J_1 \triangle J_2) \setminus \{j_1, j_2\}}$ such that
(\ref{J_k}) are nested sets of $B$ for $k=1, 2$.

Suppose that $I_1 \cap I_2 \notin B$.
If $I'_1 \cap I'_2 \notin B$, then by the hypothesis of induction,
we have $J_1 \cap J_2 \notin B$ or $J_1 \cup J_2 \subsetneq I'_1 \cup I'_2=I_1 \cup I_2$.
Suppose $I'_1 \cap I'_2 \in B$. We may assume that $I_1 \subsetneq I'_1$.
We put $I''_1=I_1$ and $I''_2=I'_1 \cap I'_2$.
We have $I''_1 \cap I''_2=I_1 \cap I'_2 \supset I_1 \cap I_2 \ne \emptyset$
and $I''_1 \cup I''_2 \subset I_1 \cup I_2$.
Since $I''_2 \subset I'_1$ and $i_2 \notin I'_1$,
it follows that $i_2 \in (I_1 \cup I_2) \setminus (I''_1 \cup I''_2)$.
Hence $|I''_1 \triangle I''_2|=|I''_1 \cup I''_2|-|I''_1 \cap I''_2|
<|I_1 \cup I_2|-|I_1 \cap I_2|=|I_1 \triangle I_2|$.
We have $i_1 \in I''_1 \setminus I''_2$ and $I'_1 \setminus I_1 \subset I''_2 \setminus I''_1$,
since $I'_1 \cup I'_2=I_1 \cup I_2$.
By the hypothesis of induction, there exist $J_1, J_2 \in B$  with
$J_1 \cap J_2 \ne \emptyset$ and
$J_1 \cup J_2 \subset I''_1 \cup I''_2 \subsetneq I_1 \cup I_2$,
$j_1 \in J_1 \setminus J_2, j_2 \in J_2 \setminus J_1$,
a maximal nested set $N$ of $B|_{J_1 \cap J_2}$ and 
a maximal nested set $N'$ of $B|_{(J_1 \triangle J_2) \setminus \{j_1, j_2\}}$ such that
(\ref{J_k}) are nested sets of $B$ for $k=1, 2$.

Therefore the assertion holds for $|I_1 \triangle I_2|$.

(2) We see that
\begin{align*}
&|\{J_k\} \cup N \cup (B|_{J_1 \cap J_2})_{\rm max} \cup N' \cup
(B|_{(J_1 \triangle J_2) \setminus \{j_1, j_2\}})_{\rm max}|\\
&=1+|J_1 \cap J_2|+|(J_1 \triangle J_2) \setminus \{j_1, j_2\}|\\
&=|J_1 \cup J_2|-1
\end{align*}
for $k=1,2$. Hence by Proposition \ref{pure},
(\ref{J_k}) are maximal nested sets of $B|_{J_1 \cup J_2}$.
We choose any maximal nested set $M$ of $(J_1 \cup J_2) \setminus B$. Then
\begin{equation*}
\{J_k\} \cup N \cup (B|_{J_1 \cap J_2})_{\rm max} \cup N' \cup
(B|_{(J_1 \triangle J_2) \setminus \{j_1, j_2\}})_{\rm max} \cup M
\end{equation*}
are maximal nested sets of $B|_{J_1 \cup J_2} \cup ((J_1 \cup J_2) \setminus B)$.
By Proposition \ref{link},
\begin{equation*}
\{J_k\} \cup N \cup (B|_{J_1 \cap J_2})_{\rm max} \cup N' \cup
(B|_{(J_1 \triangle J_2) \setminus \{j_1, j_2\}})_{\rm max} \cup N''
\end{equation*}
are in $\mathcal{N}(B)_{J_1 \cup J_2}$ for some $N'' \in \mathcal{N}(B)$. Thus
\begin{equation*}
\{J_k, J_1 \cup J_2\} \cup N \cup (B|_{J_1 \cap J_2})_{\rm max} \cup N' \cup
(B|_{(J_1 \triangle J_2) \setminus \{j_1, j_2\}})_{\rm max} \cup N''
\end{equation*}
are maximal nested sets of $B$.
\end{proof}

\begin{example}
The proof of Lemma \ref{keylemma} (1) gives a method for obtaining explicit $J_1$ and $J_2$.
Let $S=\{1, 2, 3, 4, 5, 6\}$,
\begin{align*}
B&=\{\{1\}, \{2\}, \{3\}, \{4\}, \{5\}, \{6\}, \{2, 5\}, \{2, 3, 4\}, \{3, 4, 5\}, \{1, 2, 3, 4\},\\
&\{2, 3, 4, 5\}, \{3, 4, 5, 6\}, \{1, 2, 3 ,4, 5\}, \{2, 3, 4, 5, 6\}, \{1, 2, 3, 4, 5, 6\}\},
\end{align*}
$I_1=\{1, 2, 3, 4\}, I_2=\{3, 4, 5, 6\}, i_1=1$ and $i_2=6$.
Then $I_1 \cap I_2=\{3, 4\} \notin B$.
We have
\begin{equation*}
B|_{I_1 \cap I_2}=\{\{3\}, \{4\}\},\quad
B|_{(I_1 \triangle I_2) \setminus \{i_1, i_2\}}=\{\{2\}, \{5\}, \{2, 5\}\}.
\end{equation*}
$\emptyset$ and $\{\{2\}\}$ are maximal nested sets of
$B|_{I_1 \cap I_2}$ and $B|_{(I_1 \triangle I_2) \setminus \{i_1, i_2\}}$, respectively.
However,
\begin{align*}
&\{I_1\} \cup \emptyset \cup (B|_{I_1 \cap I_2})_{\rm max} \cup \{\{2\}\}
\cup(B|_{(I_1 \triangle I_2) \setminus \{i_1, i_2\}})_{\rm max}\\
&=\{\{1, 2, 3, 4\}, \{3\}, \{4\}, \{2\}, \{2, 5\}\}
\end{align*}
is not a nested set because of $I_1=\{1, 2, 3, 4\}$ and $L=\{2, 5\}$ (Case 1).
Thus we put $I'_1=I_1 \cup L=\{1, 2, 3, 4, 5\}$ and $I'_2=I_2=\{3, 4, 5, 6\}$.
But $I'_1 \cap I'_2=\{3, 4, 5\} \in B$.
Thus we put $I''_1=I_1=\{1, 2, 3, 4\}, I''_2=I'_1 \cap I'_2=\{3, 4, 5\},
i''_1=1$ and $i''_2=5$.
Then we have
\begin{equation*}
B|_{I''_1 \cap I''_2}=\{\{3\}, \{4\}\},\quad
B|_{(I''_1 \triangle I''_2) \setminus \{i''_1, i''_2\}}=\{\{2\}\}.
\end{equation*}
The only maximal nested set of each is the empty set. However,
\begin{align*}
&\{I''_1\} \cup \emptyset \cup (B|_{I''_1 \cap I''_2})_{\rm max} \cup \emptyset
\cup(B|_{(I''_1 \triangle I''_2) \setminus \{i''_1, i''_2\}})_{\rm max}\\
&=\{\{1, 2, 3, 4\}, \{3\}, \{4\}, \{2\}\}
\end{align*}
is not a nested set because $\{2, 3, 4\} \in B$ (Case 2).
Thus we put $J_1=I''_1 \cup \{2\}=\{1, 2, 3, 4\},
J_2=I''_2 \cup \{2\}=\{2, 3, 4, 5\}, j_1=1$ and $j_2=5$.
Then we have
\begin{equation*}
B|_{J_1 \cap J_2}=\{\{2\}, \{3\}, \{4\}, \{2, 3, 4\}\},\quad
J_1 \triangle J_2=\{j_1, j_2\}.
\end{equation*}
We choose $\{\{2\}, \{3\}\}$ as a maximal nested set of $B|_{J_1 \cap J_2}$. Then
\begin{align*}
\{J_1\} \cup \{\{2\}, \{3\}\} \cup (B|_{J_1 \cap J_2})_{\rm max}
&=\{\{1, 2, 3, 4\}, \{2\}, \{3\}, \{2, 3, 4\}\},\\
\{J_2\} \cup \{\{2\}, \{3\}\} \cup (B|_{J_1 \cap J_2})_{\rm max}
&=\{\{2, 3, 4, 5\}, \{2\}, \{3\}, \{2, 3, 4\}\}
\end{align*}
are nested sets of $B$.
\end{example}

\begin{proposition}[{\cite[Proposition 4.5]{Zelevinsky}}]\label{pair}
Let $B$ be a building set on $S$
and let $N \cup \{I_1\}$ and $N \cup \{I_2\}$ be two maximal nested sets of $B$
with the intersection $N \in \mathcal{N}(B)$.
Then the following hold:
\begin{enumerate}
\item We have $I_1 \not\subset I_2$ and $I_2 \not\subset I_1$.
\item If $I_1 \cap I_2 \ne \emptyset$, then $(B|_{I_1 \cap I_2})_{\rm max} \subset N$.
\item There exist $I_3, \ldots, I_k \in N$ such that
$I_1 \cup I_2, I_3, \ldots, I_k$ are pairwise disjoint
and $I_1 \cup \cdots \cup I_k \in N \cup B_{\rm max}$ ($\{I_3, \ldots, I_k\}$ can be empty).
\end{enumerate}
\end{proposition}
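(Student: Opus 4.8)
The plan is to handle the three parts in order, first reducing to the case where $B$ is connected. Indeed, if $I_1,I_2$ lay in different $B$-components, then restricting $N\cup\{I_1\}$ and $N\cup\{I_2\}$ to one of those components and using $B=\bigsqcup_C B|_C$ would produce two maximal nested sets of some $B|_C$, one properly contained in the other, contradicting Proposition \ref{pure}; so $I_1,I_2$ lie in a common component $C$, and replacing $B$ by $B|_C$ and $N$ by $N\cap B|_C$ reduces everything to the connected case (where $B_{\rm max}=\{S\}$). For part (1), suppose $I_1\subsetneq I_2$ (they are distinct). Then $N\cup\{I_1,I_2\}$ satisfies condition (1) of Definition \ref{nested} (the pair $I_1,I_2$ is nested and every other pair already occurs in $N\cup\{I_1\}$ or $N\cup\{I_2\}$) and condition (2) as well (a pairwise disjoint subcollection cannot contain both $I_1$ and $I_2$, hence lies inside one of the two given nested sets); but it has cardinality $|N|+2$, exceeding the common size of maximal nested sets given by Proposition \ref{pure} --- a contradiction.

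The main tool for parts (2) and (3) is the following consequence of Proposition \ref{pure}. For a maximal nested set $M$ of a connected building set on $S$ and $X\in M\cup\{S\}$, let $\mathrm{ch}_M(X)$ be the set of maximal elements of $M$ properly contained in $X$; these are pairwise disjoint by nestedness. I claim the \emph{loose part} $\ell(X):=X\setminus\bigcup\mathrm{ch}_M(X)$ is a single point for every such $X$. It is nonempty: if $\mathrm{ch}_M(X)=\{Y_1,\dots,Y_r\}$ with $r\ge 2$, then $Y_1\cup\cdots\cup Y_r\notin B$ by condition (2) of Definition \ref{nested} while $X\in B$, so the union is proper; the cases $r\le 1$ are immediate. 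The sets $\ell(X)$ are pairwise disjoint and cover $S$, since each $s\in S$ lies in $\ell(X)$ precisely for $X$ the smallest member of $M\cup\{S\}$ containing $s$. As $|M\cup\{S\}|=|M|+1=|S|$ by Proposition \ref{pure}, each $\ell(X)$ is a singleton.

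For part (2), put $D=I_1\cap I_2\ne\emptyset$; the maximal elements of $B|_D$ partition $D$. Fix such a $K$; since $I_1\not\subset I_2$ and $I_2\not\subset I_1$ we have $K\subsetneq I_1$. Assuming $K\notin N$, I show $(N\cup\{I_1\})\cup\{K\}$ is a nested set, contradicting maximality of $N\cup\{I_1\}$. For condition (1): any $J\in N$ is nested with both $I_1$ and $I_2$, so if $J\cap K\ne\emptyset$ then either $I_1\subset J$ or $I_2\subset J$ (whence $K\subset J$), or else $J\subset I_1\cap I_2=D$, whence $J\subset K$ by maximality of $K$ in $B|_D$. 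For condition (2): a would-be violation $K\cup J_1\cup\cdots\cup J_m\in B$ with $J_i\in N$ pairwise disjoint and disjoint from $K$ yields $I_1\cup\bigcup\{J_i : J_i\not\subset I_1\}\in B$; if some $J_i$ is disjoint from $I_1$ this violates condition (2) for $N\cup\{I_1\}$, and symmetrically for $I_2$; and if every $J_i$ lies in $I_1$ and in $I_2$, then $K\cup J_1\cup\cdots\cup J_m$ is an element of $B|_D$ properly containing $K$, again impossible.

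For part (3), let $P$ be the smallest element of $N\cup\{S\}$ containing $I_1$ (equivalently, the parent of $I_1$ in $(N\cup\{I_1\})\cup\{S\}$), and let $I_3,\dots,I_k$ be the maximal elements of $N$ that are properly contained in $P$ and disjoint from $I_1\cup I_2$; these are exactly the siblings of $I_1$ under $P$ in $N\cup\{I_1\}$ that miss $I_2$, so $I_1\cup I_2,I_3,\dots,I_k$ are pairwise disjoint, and it remains to prove $I_1\cup I_2\cup I_3\cup\cdots\cup I_k=P$ (which lies in $N\cup\{S\}=N\cup B_{\rm max}$). First, $I_2\subset P$: otherwise $P\ne S$, so $P\in N$, and nestedness with part (1) forces $P\cap I_2=\emptyset$; but then comparing the loose-part decompositions of $P$ in $N\cup\{I_1\}$ and in $N\cup\{I_2\}$ (in the latter, the children of $P$ are the siblings $A_j$ of $I_1$ together with the children of $I_1$) forces $|\ell(P)|\ge 2$, contradicting the lemma. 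By symmetry $P$ is also the smallest element of $N\cup\{S\}$ containing $I_2$. For the equality: every point of $P$ lies in $I_1$, or in some sibling $A_j$ of $I_1$ under $P$ (and then $A_j\subset I_2$ or $A_j\in\{I_3,\dots,I_k\}$), or is the loose point of $P$ in $N\cup\{I_1\}$; in the last case it must lie in $I_2$, for otherwise tracking the loose-part decomposition of $I_2$ in $N\cup\{I_2\}$ forces $\ell(I_2)=\emptyset$, and when $I_1\cap I_2\ne\emptyset$ this is excluded by part (2), since the maximal element of $B|_{I_1\cap I_2}$ through the loose point of $I_1$ lies in $N$ and is hence contained in some child of $I_1$ --- contradicting that this point is loose for $I_1$. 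The crux of the whole argument is exactly this last step: verifying that the sibling decomposition of $P$ does not fall one point short of $P$. Parts (1), (2) and the loose-part lemma are comparatively mechanical, but this step feeds part (2) into the counting identity of Proposition \ref{pure} in an essential way.
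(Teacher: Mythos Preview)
The paper does not prove this proposition; it is quoted from \cite[Proposition~4.5]{Zelevinsky} without argument, so there is no ``paper's own proof'' to compare against. Your write-up is therefore an independent proof, and most of it is sound: the reduction to the connected case, part~(1), the loose-part lemma, and part~(2) are all correct. In part~(3) the argument that $I_2\subset P$ and that $P$ is the common parent is also fine.

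The genuine gap is in the last step of part~(3), where you argue that the loose point $p=\ell(P)$ (computed in $N\cup\{I_1\}$) lies in $I_2$. Your justification in the case $I_1\cap I_2\ne\emptyset$ reads: ``the maximal element of $B|_{I_1\cap I_2}$ through the loose point of $I_1$ lies in $N$ and is hence contained in some child of $I_1$ --- contradicting that this point is loose for $I_1$.'' But this sentence, taken literally, proves only that $\ell(I_1)\notin I_2$, and it does so \emph{without using} the hypothesis $p\notin I_2$; so as written it cannot be the desired contradiction. What actually works is the symmetric statement with $I_1$ and $I_2$ interchanged: if $p\notin I_2$, then the loose point $q'=\ell(I_2)$ (computed in $N\cup\{I_2\}$) lies in $P\setminus\{p\}=I_1\cup\bigcup_j A_j$; each $A_j$ meeting $I_2$ is a child of $I_2$, so $q'\notin A_j$; and if $q'\in I_1\cap I_2$ then by part~(2) the element $K\in (B|_{I_1\cap I_2})_{\max}$ through $q'$ lies in $N$ with $K\subsetneq I_2$, so $q'$ is contained in a child of $I_2$, contradicting that $q'$ is loose for $I_2$. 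Hence $q'$ has nowhere to go, i.e.\ $\ell(I_2)=\emptyset$, which contradicts the loose-part lemma. (When $I_1\cap I_2=\emptyset$ the same conclusion follows without invoking part~(2).) Replacing ``$I_1$'' by ``$I_2$'' in that final sentence fixes the argument.
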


\begin{proof}[Proof of Theorem \ref{theorem1}]
Any building set is a disjoint union of connected building sets.
The disjoint union of connected building sets corresponds
to the product of toric varieties associated to the connected building sets.
The product of nonsingular projective toric varieties is Fano if and only if
every factor is Fano. Hence it suffices to show that,
for any connected building set $B$ on $S=\{1, \ldots, n+1\}$,
the following are equivalent:
\begin{itemize}
\item[($1'$)] $X(\Delta(B))$ is Fano.
\item[($2'$)] $I_1, I_2 \in B, I_1 \cap I_2 \ne \emptyset, I_1 \not\subset I_2, I_2 \not\subset I_1
\Rightarrow I_1 \cup I_2=S \mbox{ and } I_1 \cap I_2 \in B$.
\end{itemize}

$(1') \Rightarrow (2')$: Suppose that there exist $I_1, I_2 \in B$ with
$I_1 \cap I_2 \ne \emptyset, I_1 \not\subset I_2, I_2 \not\subset I_1$
such that $I_1 \cup I_2 \subsetneq S \mbox{ or } I_1 \cap I_2 \notin B$.
We will use the notation of Lemma \ref{keylemma}.

{\it The case where $I_1 \cup I_2 \subsetneq S$}.
By Lemma \ref{keylemma}, we have maximal nested sets
\begin{equation*}
\{J_k, J_1 \cup J_2\} \cup N \cup (B|_{J_1 \cap J_2})_{\rm max} \cup N' \cup
(B|_{(J_1 \triangle J_2) \setminus \{j_1, j_2\}})_{\rm max} \cup N''
\end{equation*}
of $B$ for $k=1, 2$. Let
\begin{equation*}
\tau=\mathbb{R}_{\geq0}(\{J_1 \cup J_2\} \cup N \cup (B|_{J_1 \cap J_2})_{\rm max}
\cup N' \cup (B|_{(J_1 \triangle J_2) \setminus \{j_1, j_2\}})_{\rm max} \cup N'').
\end{equation*}
Clearly
\begin{equation*}
e_{J_1}+e_{J_2}-\sum_{C \in (B|_{J_1 \cap J_2})_{\rm max}}e_C-e_{J_1 \cup J_2}=0.
\end{equation*}
Hence by Proposition \ref{intersectionnumber},
we have $(-K_{X(\Delta(B))}.V(\tau))=2-|(B|_{J_1 \cap J_2})_{\rm max}|-1 \leq 0$.
By Proposition \ref{Fano}, $X(\Delta(B))$ is not Fano.

{\it The case where $I_1 \cup I_2=S$ and $I_1 \cap I_2 \notin B$}.
By Lemma \ref{keylemma} (1), we have nested sets
\begin{equation*}
\{J_k\} \cup N \cup (B|_{J_1 \cap J_2})_{\rm max} \cup N' \cup
(B|_{(J_1 \triangle J_2) \setminus \{j_1, j_2\}})_{\rm max}
\end{equation*}
of $B$ for $k=1, 2$,
where $J_1 \cap J_2 \notin B$ or $J_1 \cup J_2 \subsetneq I_1 \cup I_2=S$.
If $J_1 \cup J_2 \subsetneq S$,
then by Lemma \ref{keylemma} (2), we have maximal nested sets
\begin{equation*}
\{J_k, J_1 \cup J_2\} \cup N \cup (B|_{J_1 \cap J_2})_{\rm max} \cup N' \cup
(B|_{(J_1 \triangle J_2) \setminus \{j_1, j_2\}})_{\rm max} \cup N''
\end{equation*}
and a similar augment shows that $X(\Delta(B))$ is not Fano.
If $J_1 \cap J_2 \notin B$ and $J_1 \cup J_2=S$,
then we have $|(B|_{J_1 \cap J_2})_{\rm max}| \geq 2$. Let
\begin{equation*}
\tau=\mathbb{R}_{\geq0}(N \cup (B|_{J_1 \cap J_2})_{\rm max}
\cup N' \cup (B|_{(J_1 \triangle J_2) \setminus \{j_1, j_2\}})_{\rm max}).
\end{equation*}
Note that $\tau$ is an $(n-1)$-dimensional cone.
Since $e_{J_1 \cup J_2}=e_S=0$, it follows that
\begin{equation*}
e_{J_1}+e_{J_2}-\sum_{C \in (B|_{J_1 \cap J_2})_{\rm max}}e_C=0.
\end{equation*}
Hence by Proposition \ref{intersectionnumber},
we have $(-K_{X(\Delta(B))}.V(\tau))=2-|(B|_{J_1 \cap J_2})_{\rm max}| \leq 0$.
By Proposition \ref{Fano}, $X(\Delta(B))$ is not Fano.

$(2') \Rightarrow (1')$: Let $N \cup \{I_1\}$ and $N \cup \{I_2\}$
be two maximal nested sets of $B$
with the intersection $N \in \mathcal{N}(B)$.
We need to show that $(-K_{X(\Delta(B))}.V(\mathbb{R}_{\geq0}N))>0$.

{\it The case where $I_1 \cap I_2=\emptyset$}.
By Proposition \ref{pair} (3),
there exist $I_3, \ldots, I_k \in N$ such that
$I_1 \cup I_2, I_3, \ldots, I_k$ are pairwise disjoint
and $I_1 \cup \cdots \cup I_k \in N \cup B_{\rm max}=N \cup \{S\}$. Since
\begin{equation*}
e_{I_1}+e_{I_2}+e_{I_3}+\cdots+e_{I_k}-e_{I_1 \cup \cdots \cup I_k}=0,
\end{equation*}
we have
\begin{equation*}
(-K_{X(\Delta(B))}.V(\mathbb{R}_{\geq0}N))=\left\{\begin{array}{ll}
k-1 & (I_1 \cup \cdots \cup I_k \in N), \\
k & (I_1 \cup \cdots \cup I_k=S). \end{array}\right.
\end{equation*}
Hence $(-K_{X(\Delta(B))}.V(\mathbb{R}_{\geq0}N)) \geq 1$.

{\it The case where $I_1 \cap I_2 \ne \emptyset$}.
By Proposition \ref{pair} (1), we have $I_1 \not\subset I_2$ and $I_2 \not\subset I_1$.
Applying $(2')$ for $I_1$ and $I_2$, we have $I_1 \cup I_2=S$ and $I_1 \cap I_2 \in B$.
Thus $\{I_1 \cap I_2\}=(B|_{I_1 \cap I_2})_{\rm max} \subset N$
by Proposition \ref{pair} (2).
Since $e_{I_1 \cup I_2}=e_S=0$, it follows that
\begin{equation*}
e_{I_1}+e_{I_2}-e_{I_1 \cap I_2}=0.
\end{equation*}
Hence $(-K_{X(\Delta(B))}.V(\mathbb{R}_{\geq0}N))=1$ by Proposition \ref{intersectionnumber}.

Therefore $X(\Delta(B))$ is Fano by Proposition \ref{Fano}.
This completes the proof of Theorem \ref{theorem1}.
\end{proof}

\section{Smooth Fano polytopes associated to finite directed graphs}

We review the construction of an integral convex polytope from a finite directed graph.
Let $G$ be a finite directed graph with no loops and no multiple arrows.
We denote by $V(G)$ and $A(G)$ its node set and arrow set respectively.
$A(G)$ is a subset of $V(G) \times V(G)$. Let $V(G)=\{1, \ldots, n+1\}$.
For $\overrightarrow{e}=(i, j) \in A(G)$,
we define $\rho(\overrightarrow{e})\in \mathbb{R}^{n+1}$ to be $e_i-e_j$.
We define $P_G$ to be the convex hull of $\{\rho(\overrightarrow{e}) \mid
\overrightarrow{e} \in A(G)\}$ in $\mathbb{R}^{n+1}$.
$P_G$ is an integral convex polytope in $H=\{(x_1, \ldots, x_{n+1}) \in \mathbb{R}^{n+1} \mid
x_1+\cdots+x_{n+1}=0\}$.

An integral convex polytope is said to be {\it Fano} if the origin is the only lattice point
in the interior, and it is said to be {\it smooth} if the vertices of every facet
form a basis for the lattice.
Not all finite directed graphs yield smooth Fano polytopes.
See \cite{Higashitani} for the characterization of finite directed graphs
that yield smooth Fano polytopes of dimension $n$.

We state our second main result:

\begin{theorem}\label{theorem2}
Let $B$ be a building set.
If the associated toric variety $X(\Delta(B))$ is Fano,
then there exists a finite directed graph $G$ such that
$P_G$ is a smooth Fano polytope and its associated fan is isomorphic to $\Delta(B)$.
\end{theorem}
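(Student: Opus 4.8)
The plan is to reduce to the connected case, then to realize the nested set fan $\Delta(B)$ of a connected Fano building set as the associated fan of an explicit graph polytope by producing a lattice isomorphism that carries the primitive ray generators $e_I$ to difference vectors $e_a-e_b$, and finally to invoke the correspondence between smooth Fano polytopes and smooth toric Fano varieties.

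\textbf{Reduction to connected building sets.} Suppose the statement holds for connected building sets and $B=\bigsqcup_{C\in B_{\rm max}}B|_C$ with $B|_C$ realized by a digraph $G_C$ on node set $V(G_C)$. Put $G=\bigsqcup_C G_C$, the disjoint union of the $G_C$ on the disjoint union of the node sets. Since arrows never join distinct components, the vectors $\rho(\overrightarrow{e})$ for $\overrightarrow{e}\in A(G_C)$ lie in the coordinate subspace of $H$ spanned by $V(G_C)$, so $P_G$ is the direct (free) sum of the $P_{G_C}$, each passing through the origin in its own subspace. Its associated fan is then the product of the associated fans of the $P_{G_C}$, i.e.\ $\prod_C\Delta(B|_C)=\Delta(B)$, and it is smooth Fano because every summand is. So it suffices to treat connected $B$.

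\textbf{The combinatorial heart.} Assume $B$ is connected and Fano with $S=\{1,\dots,n+1\}$. I claim there is a cyclic ordering of $S$ (an identification of $S$ with the vertices of an oriented $(n{+}1)$-cycle) such that every $I\in B\setminus\{S\}$ is a nonempty proper cyclic interval. If $B$ is laminar (no two incomparable members of $B\setminus\{S\}$ meet), this is the standard fact that a laminar family containing the ground set and all singletons becomes a family of intervals under a depth-first ordering of its tree of inclusions. Otherwise, choose incomparable $I_1,I_2\in B\setminus\{S\}$ with $I_1\cap I_2\ne\emptyset$ whose intersection $A:=I_1\cap I_2$ is inclusion-minimal among all such intersections; by Theorem \ref{theorem1}, $I_1\cup I_2=S$ and $A\in B$, and $S=P\sqcup A\sqcup Q$ with $P:=S\setminus I_2$, $Q:=S\setminus I_1$ nonempty. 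Any incomparable meeting pair inside $I_1$ (resp.\ $I_2$) would, by Theorem \ref{theorem1}, have union $S$, which is impossible, so $B|_{I_1}$ and $B|_{I_2}$ are laminar. Fix a depth-first order $\tau$ of $B|_A$, extend it to a depth-first order of $B|_{I_1}$ with the block $A$ placed last and to a depth-first order of $B|_{I_2}$ with $A$ placed first, and glue these along $A$ into the cyclic order $P,A,Q$ on $S$. A case analysis on the position of an arbitrary $I\in B\setminus\{S\}$ relative to $I_1$ and $I_2$ — using Theorem \ref{theorem1} and the interval structure of $B|_{I_1}$, $B|_{I_2}$ each time, and using minimality of $A$ to rule out the one otherwise-bad configuration — shows that $I$ is always a cyclic interval. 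Carrying out this gluing and case analysis carefully is the main obstacle.

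\textbf{Construction of $G$ and conclusion.} Relabel so that $S=\{1,\dots,n+1\}$ carries the cycle $1\to2\to\dots\to n+1\to 1$, and let $G$ be the digraph on $\{1,\dots,n+1\}$ with an arrow $(a,b)$ exactly when the cyclic interval $\{a,a+1,\dots,b-1\}$ lies in $B\setminus\{S\}$. Distinct proper cyclic intervals give distinct ordered pairs, so $I\mapsto(a_I,b_I)$ is a bijection $B\setminus\{S\}\to A(G)$ and $G$ has no loops and no multiple arrows. Define $\phi\colon\mathbb{Z}^n\to M_G=\{x\in\mathbb{Z}^{n+1}:\sum x_i=0\}$ by $\phi(e_k)=e_k-e_{k+1}$ for $k=1,\dots,n$ (standard basis vectors on each side); this sends a basis to a basis, hence is a lattice isomorphism, and since $e_{n+1}=-e_1-\dots-e_n$ on the source it also satisfies $\phi(e_{n+1})=e_{n+1}-e_1$. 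Telescoping around the cycle gives $\phi(e_I)=\sum_{i\in I}(e_i-e_{i+1})=e_{a_I}-e_{b_I}=\rho((a_I,b_I))$ for every proper cyclic interval $I$, so $\phi$ carries the set $\{e_I:I\in B\setminus\{S\}\}$ of primitive ray generators of $\Delta(B)$ bijectively onto the vertex set $\{\rho(\overrightarrow{e}):\overrightarrow{e}\in A(G)\}$ of $P_G$. Since $X(\Delta(B))$ is Fano and $\Delta(B)$ is a smooth complete fan, $\mathrm{conv}\{e_I:I\in B\setminus\{S\}\}$ is a smooth Fano polytope whose associated fan is $\Delta(B)$; applying the lattice isomorphism $\phi$, the image $P_G$ is a smooth Fano polytope whose associated fan is $\phi(\Delta(B))\cong\Delta(B)$. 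Combined with the reduction to the connected case, this proves the theorem.
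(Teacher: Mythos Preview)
Your approach is essentially the paper's: realize every $I\in B\setminus\{S\}$ as a cyclic interval on $\{1,\dots,n+1\}$, assign to it the arrow from its start to one past its end, and use the lattice isomorphism $e_k\mapsto e_k-e_{k+1}$ to carry $e_I$ to $\rho(\overrightarrow{e}_I)$. Your uniform cyclic-interval language is a pleasant repackaging of what the paper splits into three cases according to whether the set $U=\{I\in B\setminus\{S\}:\exists J,\ I\cap J\ne\emptyset,\ I\cup J=S\}$ has $0$, $2$, or $3$ elements; the paper's exceptional arrow $(b+1,a)$ in case (c) is precisely your wrap-around interval $S\setminus(I_1\cap I_2)$.

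Two points deserve attention. First, your reduction to the connected case does not quite work as written: if $G=\bigsqcup_C G_C$ is a \emph{disjoint} union then every $\rho(\overrightarrow{e})$ lies in the proper subspace $\bigoplus_C H_C\subsetneq H$, so $P_G$ is not full-dimensional in $H$ and hence is not a smooth Fano polytope in Higashitani's sense (the origin is not an interior point). The paper instead glues the $G_C$ along a single shared node, which keeps $P_G$ full-dimensional while still producing the product fan. Second, the ``case analysis on the position of an arbitrary $I$'' that you defer is exactly the substance of the paper's preparatory lemmas: one must show that every $K\in B\setminus\{S,I_1,I_2\}$ lies in one of $P,A,Q$ or equals $P\cup Q$. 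This follows from Theorem~\ref{theorem1} alone (apply it to $K$ against $I_1$ and $I_2$, and in the remaining subcase to $K\cap I_1$ against $K\cap I_2$); your minimality assumption on $A=I_1\cap I_2$ is in fact not needed.
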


For a connected building set $B$ on $S$, we put
\begin{equation}\label{u}
U=\{I \in B \setminus \{S\} \mid \mbox{ there exists } J \in B \setminus \{S\}
\mbox{ s.t. } I \cap J \ne \emptyset \mbox{ and } I \cup J=S\}.
\end{equation}

\begin{lemma}\label{intersection}
Let $B$ be a connected building set on $S$ such that $X(\Delta(B))$ is Fano.
If $I, J \in U$ with $I \ne J$ and $I \cap J \ne \emptyset$,
then we have $I \cup J=S$ and $I \cap J \in B$.
\end{lemma}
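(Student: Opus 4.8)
The plan is to derive everything from the combinatorial description of the Fano case. Since $B$ is connected and $X(\Delta(B))$ is Fano, Theorem~\ref{theorem1} tells us that whenever $A_1, A_2 \in B$ satisfy $A_1 \cap A_2 \ne \emptyset$, $A_1 \not\subset A_2$ and $A_2 \not\subset A_1$, then $A_1 \cup A_2 = S$ and $A_1 \cap A_2 \in B$; call this property $(\ast)$. If $I$ and $J$ are incomparable, then $(\ast)$ applied to $I, J$ (they intersect by hypothesis) gives the conclusion at once. So the whole task is to rule out the possibility that $I \subset J$ or $J \subset I$; since the hypotheses and the conclusion are symmetric in $I$ and $J$, it suffices to obtain a contradiction from $I \subsetneq J$.

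Assuming $I \subsetneq J$, I would first exploit $I \in U$: choose $I' \in B \setminus \{S\}$ with $I \cap I' \ne \emptyset$ and $I \cup I' = S$. Then $I'$ and $J$ are incomparable and intersecting: $I' \not\subset J$, since otherwise $S = I \cup I' \subset J$, contradicting $J \subsetneq S$; $J \not\subset I'$, since $I \subset J \subset I'$ would force $S = I \cup I' = I'$, contradicting $I' \ne S$; and $I' \cap J \supset I' \cap I \ne \emptyset$. Hence $(\ast)$ applied to $I', J$ yields $K := I' \cap J \in B$.

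Next I would apply $(\ast)$ to $I$ and $K$. They intersect because $I \subset J$ gives $I \cap K = I \cap I' \cap J = I \cap I' \ne \emptyset$. They are incomparable: $I \subset K$ would give $I \subset I'$, hence $S = I \cup I' = I'$, impossible; and for the other direction, note that $I \cup I' = S$ forces $S \setminus I' \subset I$, so $J = K \cup (J \setminus I')$ with $J \setminus I' \subset S \setminus I' \subset I$, whence $K \subset I$ would imply $J \subset I$, contradicting $I \subsetneq J$. Therefore $(\ast)$ gives $I \cup K = S$; but $I \subset J$ and $K \subset J$, so $I \cup K \subset J \subsetneq S$ --- a contradiction. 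Thus $I$ and $J$ must be incomparable, and a last application of $(\ast)$ to $I, J$ finishes the proof.

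The step I expect to be the most delicate is the second incomparability check, namely that $K \subset I$ is impossible: it relies on turning $I \cup I' = S$ into the dual inclusion $S \setminus I' \subset I$ and combining it with $I \subsetneq J$. The rest is a direct and essentially mechanical use of the Fano criterion $(\ast)$, with no appeal to Lemma~\ref{keylemma} or to the polytope construction.
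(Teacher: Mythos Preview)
Your proof is correct and follows essentially the same route as the paper's: assume $I\subsetneq J$, pick a witness $I'$ for $I\in U$ (the paper calls it $K$), apply the Fano criterion to $J$ and $I'$ to get $I'\cap J\in B$, then apply it again to $I$ and $I'\cap J$ to force $I\cup(I'\cap J)=S$, contradicting $I\cup(I'\cap J)\subset J\subsetneq S$. The only difference is cosmetic---you argue the incomparabilities by contradiction, while the paper exhibits explicit elements in the set differences.
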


\begin{proof}
Let $I, J \in U$ with $I \ne J$ and $I \cap J \ne \emptyset$.
We show that $I \not\subset J$ and $J \not\subset I$.
Assume $I \subsetneq J$ for contradiction.
There exists $K \in U$ such that $I \cap K \ne \emptyset$ and $I \cup K=S$.

First we apply Theorem \ref{theorem1} (2) for $J$ and $K$.
We have $J \cap K \supset I \cap K \ne \emptyset$
and $I \setminus K \subset J \setminus K$.
Let $x \in S \setminus J$. Then we must have $x \notin I$ and thus $x \in K$.
Hence $x \in K \setminus J$.
Thus Theorem \ref{theorem1} (2) implies $J \cap K \in B$.

Next we apply Theorem \ref{theorem1} (2) for $I$ and $J \cap K$.
We have $I \cap (J \cap K)=I \cap K \ne \emptyset$
and $I \setminus K \subset I \setminus (J \cap K)$.
Let $x \in J \setminus I$. Then we must have $x \in K$.
Hence $x \in (J \cap K) \setminus I$.
Thus Theorem \ref{theorem1} (2) implies $I \cup (J \cap K)=S$.
This contradicts that $I \cup (J \cap K)=J \cap (I \cup K)=J \subsetneq S$.
Therefore $I \not\subset J$.

Similarly we have $J \not\subset I$.
Theorem \ref{theorem1} (2) implies $I \cup J=S$ and $I \cap J \in B$.
This completes the proof.
\end{proof}

\begin{lemma}\label{3types}
Let $B$ be a connected building set on $S$ such that $X(\Delta(B))$ is Fano.
Then the following hold:
\begin{enumerate}
\item $U$ in (\ref{u}) must be one of the following:
\begin{enumerate}
\item $U=\emptyset$.
\item $|U|=2$.
\item $U=\{I, J, S \setminus (I \cap J)\}$ for some $I, J \in B$,
and the union of any two elements of $U$ is $S$.
\end{enumerate}
\item Let $I, J \in U$ with $I \cap J \ne \emptyset \mbox{ and } I \cup J=S$.
If $K \in B \setminus \{S, I, J\}$ with $K \not\subset I \setminus J, K \not\subset I \cap J$
and $K \not\subset J \setminus I$,
then we have $K=S \setminus (I \cap J)$.
\end{enumerate}
\end{lemma}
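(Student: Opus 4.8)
The plan is to prove both parts by repeated application of Lemma \ref{intersection} to elements of $U$, exploiting the rigidity it forces: any two distinct intersecting members of $U$ must have union $S$ and intersection in $B$. First I would dispose of part (1). If $U \ne \emptyset$, pick $I \in U$ and $J \in B \setminus \{S\}$ with $I \cap J \ne \emptyset$ and $I \cup J = S$; by the definition of $U$ this $J$ also lies in $U$, so $|U| \ge 2$ and we are in case (b) or (c). Suppose $|U| \ge 3$, so there is a third element $K \in U$ distinct from $I$ and $J$. The key observation is that $K$ cannot be disjoint from both $I$ and $J$: since $I \cup J = S$, any nonempty $K$ meets $I$ or $J$. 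Applying Lemma \ref{intersection} to whichever of the pairs $\{K,I\}$, $\{K,J\}$ intersect, we get $K \cup I = S$ or $K \cup J = S$ (and the corresponding intersection lies in $B$). I would then show that in fact $K$ must meet \emph{both} $I$ and $J$ and hence $K \cup I = K \cup J = S$: if, say, $K \cap J = \emptyset$, then $K \subset I$, so $K \subset I \setminus J \subsetneq S$; but $K \cup I = S$ forces... here one has to argue carefully, using that $K \cup I = S$ and $K \subset I$ give $I = S$, a contradiction. So $K$ meets both, and the unions with $I$ and with $J$ are both $S$. Writing $x \in S \setminus (I \cap J)$, the condition $K \cup I = S = K \cup J$ combined with $K \ne S$ should pin down $K = S \setminus (I \cap J)$: indeed $S \setminus K \subset I$ and $S \setminus K \subset J$ give $S \setminus K \subset I \cap J$, while $K \cup I = S$ gives $I \cap J \subset K$ is false — rather $S \setminus I \subset K$ and $S \setminus J \subset K$, so $S \setminus (I \cap J) = (S \setminus I) \cup (S \setminus J) \subset K$; and $K$ is disjoint from $I \cap J$ because $K \cap I \cap J \subset$ ... one checks using Lemma \ref{intersection} on $\{K,I\}$ that $K \cap I \in B$ and comparing cardinalities via Lemma \ref{intersection} on $\{I,J\}$, i.e. $|I|+|J| = |S| + |I \cap J|$. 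The cleanest route: show $K = S \setminus (I \cap J)$ directly, then observe any two of $I, J, K$ have union $S$ by construction, giving case (c); and a fourth element would have to equal $S \setminus (I \cap J) = K$ by the same argument, so $|U| \le 3$.

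For part (2), given $I, J \in U$ with $I \cap J \ne \emptyset$, $I \cup J = S$, and $K \in B \setminus \{S, I, J\}$ with $K \not\subset I \setminus J$, $K \not\subset I \cap J$, $K \not\subset J \setminus I$, I would analyze how $K$ sits relative to the partition $S = (I \setminus J) \sqcup (I \cap J) \sqcup (J \setminus I)$. The hypothesis says $K$ is not contained in any single block. If $K$ meets $I$ but is not contained in $I$, then $K \not\subset I$ and $K \cap I \ne \emptyset$; since also $K \not\subset I$ means... actually we must be careful whether $K \subset I$ or $K \subset J$. First suppose $K \subset I$. Then $K \not\subset I \setminus J$ and $K \not\subset I \cap J$ force $K$ to meet both $I \setminus J$ and $I \cap J$, so $K \cap J = K \cap I \cap J \ne \emptyset$ and $K \not\subset J$, hence by Theorem \ref{theorem1}(2) applied to $K, J$ (they intersect, neither contains the other since $K \not\subset J$ and $J \not\subset K$ as $K \subsetneq I \subsetneq S$ while ... need $J \not\subset K$, which holds because $K \subset I$ and $J \not\subset I$) we get $K \cup J = S$; but $K \cup J \subset I \cup J = S$ with $K \subset I$ gives $K \supset I \setminus J$ forced, and then $K \cup J = S$ automatically — the real content is $K \cap J \in B$ and more usefully that $K \cup J = S$ while $K \subset I$ forces $S \setminus J \subset K$, i.e. $I \setminus J \subset K$; symmetric reasoning is not available since $K \subset I$. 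I would instead conclude here that $K = (I \setminus J) \cup (\text{something in } I \cap J)$, and then run Theorem \ref{theorem1}(2) once more, on $K$ and $J$, now knowing $K \cup J = S$, to force $K \cap J = I \cap J$? That would give $K = I$, excluded. So the case $K \subset I$ (and symmetrically $K \subset J$) is impossible. The remaining case is $K \not\subset I$ and $K \not\subset J$. Then $K$ meets $I$ (else $K \subset J$) and meets $J$ (else $K \subset I$); Theorem \ref{theorem1}(2) applied to $K, I$ gives $K \cup I = S$ and $K \cap I \in B$, and applied to $K, J$ gives $K \cup J = S$ and $K \cap J \in B$. From $K \cup I = S$: $S \setminus I = J \setminus I \subset K$. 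From $K \cup J = S$: $S \setminus J = I \setminus J \subset K$. Hence $(I \setminus J) \cup (J \setminus I) \subset K$, i.e. $S \setminus (I \cap J) \subset K$. It remains to show $K \cap (I \cap J) = \emptyset$, which then yields $K = S \setminus (I \cap J)$ exactly. The hard part — the main obstacle — is precisely this last disjointness: ruling out that $K$ contains part of $I \cap J$. I expect to obtain it from a cardinality count: $K \cap I \in B$ and $K \cap I = (I \setminus J) \cup (K \cap I \cap J)$; if $K \cap I \cap J$ were a proper nonempty subset of $I \cap J$ one can try to derive a contradiction by applying Theorem \ref{theorem1}(2) to the pair $K \cap I$ and $J$ (they intersect in $K \cap I \cap J$, neither contains the other), concluding $(K \cap I) \cup J = S$, hence $S \setminus J = I \setminus J \subset K \cap I$ (fine) — no contradiction yet — and $(K\cap I)\cap J = K \cap I \cap J \in B$, and crucially $(K \cap I) \cup J = S$ while this union equals $J \cup (I \setminus J) \cup (K \cap I \cap J) = J \cup (I \setminus J)$, which is $S$ only if — wait, $J \cup (I \setminus J) = I \cup J = S$ always. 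So I would instead apply Theorem \ref{theorem1}(2) to $I$ and $K \cap J$: they intersect in $I \cap K \cap J$; neither contains the other provided $K \cap J \not\subset I$ and $I \not\subset K \cap J$; then the conclusion $I \cup (K \cap J) = S$ means $S \setminus I = J \setminus I \subset K \cap J \subset K$ (already known) while $I \cap (K \cap J) = I \cap J \cap K \in B$, and this union being $S$ forces nothing new. The genuinely forcing step I would rely on is comparing $K$ with $S \setminus (I \cap J)$ directly: set $L = S \setminus (I \cap J) \in U$ (it is in $U$ by Lemma \ref{3types}(1)(c) if $L \ne I, J$, or it equals one of them). If $L \notin \{I, J\}$ then $L, K$ both contain $(I \setminus J) \cup (J \setminus I)$; if $K \ne L$ then $K \cap L \supset$ this set $\ne \emptyset$ and $K, L \in B$ with, after checking, $K \not\subset L$ (as $K$ meets $I \cap J$ by assumption of contradiction while $L$ does not) and $L \not\subset K$ — apply Theorem \ref{theorem1}(2) to get $K \cap L \in B$ and $K \cup L = S$; but then by Lemma \ref{intersection} on $K, L$ in $U$... and $|K| + |L| = |S| + |K \cap L|$ with $K \cap L = L$ (since $L \subset K$ would follow from $K \supset I\setminus J, J \setminus I$ and $K$ meeting $I\cap J$ — no). I will organize this final cardinality/containment bookkeeping to conclude $K \cap (I \cap J) = \emptyset$, whence $K = S \setminus (I \cap J)$; this disjointness argument is the crux and the rest is routine. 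This also feeds back to complete case (1)(c): once part (2)'s identification is available, a fourth element of $U$ is forced to equal $S \setminus (I \cap J)$.
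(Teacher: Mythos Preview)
Your overall strategy matches the paper's --- use Lemma~\ref{intersection} repeatedly on pairs in $U$, then invoke the Fano condition (Theorem~\ref{theorem1}(2)) --- but there is a genuine gap at exactly the point you flag as ``the crux'': the disjointness $I\cap J\cap K=\emptyset$. You try applying Theorem~\ref{theorem1}(2) to the pairs $(K\cap I,\,J)$ and $(I,\,K\cap J)$ and find no contradiction, then gesture at ``cardinality/containment bookkeeping'' without supplying it. The paper's missing idea is to apply Theorem~\ref{theorem1}(2) to the pair $(I\cap K,\,J\cap K)$: both lie in $B$ by Lemma~\ref{intersection}; if $I\cap J\cap K\neq\emptyset$ they intersect; and $I\setminus J\subset (I\cap K)\setminus(J\cap K)$, $J\setminus I\subset (J\cap K)\setminus(I\cap K)$ show neither contains the other. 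The Fano condition then forces $(I\cap K)\cup(J\cap K)=S$, but this union equals $(I\cup J)\cap K=K\subsetneq S$, a contradiction. None of your attempted pairings isolate this.

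Your treatment of part~(2) is also more tangled than necessary and risks circularity when you reach for $L=S\setminus(I\cap J)\in U$ via part~(1)(c). The paper's route is short: by the symmetry $I\leftrightarrow J$ one may assume $K\not\subset I$; then $I\cap K\neq\emptyset$ (else $K\subset J\setminus I$) and $I\not\subset K$ (else $K\in U$ with $I\subset K$, contradicting part~(1)); so Theorem~\ref{theorem1}(2) gives $I\cup K=S$, whence $K\in U$, and part~(1) immediately yields $K=S\setminus(I\cap J)$. You do eventually reach $K\in U$ in your ``$K\not\subset I$ and $K\not\subset J$'' case, but then restart the disjointness argument instead of simply invoking part~(1), which you have already (independently) proved.
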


\begin{proof}
(1) If $U \ne \emptyset$, then we have $|U| \geq 2$. Suppose $|U| \geq 3$.
Let $I \in U$.
There exists $J \in U$ such that  $I \cap J \ne \emptyset$ and $I \cup J=S$.
Let $K \in U \setminus \{I, J\}$.
We may assume $I \cap K \ne \emptyset$.
Lemma \ref{intersection} implies $I \cup K=S$ and $I \cap K \in B$.
Since $J \setminus I \subset K$, we have $J \cap K \ne \emptyset$.
Lemma \ref{intersection} implies $J \cup K=S$ and $J \cap K \in B$.

Assume $I \cap J \cap K \ne \emptyset$ for contradiction.
We apply Theorem \ref{theorem1} (2) for $I \cap K$ and $J \cap K$.
We have $I \setminus J \subset (I \cap K) \setminus (J \cap K)$
and $J \setminus I \subset (J \cap K) \setminus (I \cap K)$.
Thus Theorem \ref{theorem1} (2) implies $(I \cap K) \cup (J \cap K)=S$.
This contradicts that $(I \cap K) \cup (J \cap K)=(I \cup J) \cap K=K \subsetneq S$.
Hence $I \cap J \cap K=\emptyset$.
Thus $x \in I \cap J$ implies $x \notin K$.

On the other hand, $x \in S \setminus K$ implies $x \in I \cap J$,
since $I \cup K=J \cup K=S$.
Thus $K=S \setminus (I \cap J)$.
Therefore we must have $U=\{I, J, S \setminus (I \cap J)\}$.
The union of any two elements of $U$ is $S$.

(2) Let $K \in B \setminus \{S, I, J\}$
with $K \not\subset I \setminus J, K \not\subset I \cap J$
and $K \not\subset J \setminus I$.
We may assume that there exists $x \in K \setminus I$.
If $I \cap K=\emptyset$, then we have $K \subset J \setminus I$, which is a contradiction.
Hence $I \cap K \ne \emptyset$.
If $I \subset K$, then $J \cup K \supset I \cup J=S$. Thus $K \in U$.
However (1) implies $K=S \setminus (I \cap J)$, which is a contradiction.
Hence $I \not\subset K$.
Theorem \ref{theorem1} (2) implies $I \cup K=S$.
Thus $K \in U$ and (1) implies $K=S \setminus (I \cap J)$.
\end{proof}

For a building set $B$ on $S$, we put
\begin{equation*}
l(B)=\max\{k \mid \mbox{ there exist } I_1, \ldots, I_k \in B \mbox{ such that }
|I_1| \geq 2, I_1 \subsetneq \cdots \subsetneq I_k\}
\end{equation*}
and we define $m(B)$ to be
\begin{equation*}
\left\{\begin{array}{ll}
\{I \in B \mid |I| \geq 2; \exists I_2, \ldots, I_{l(B)} \in B \mbox{ s.t. }
I \subsetneq I_2 \subsetneq \cdots \subsetneq I_{l(B)}\} & (l(B) \geq 2), \\
\{I \in B \mid |I| \geq 2\} & (l(B) \leq 1). \end{array}\right.
\end{equation*}
$l(B)$ is understood to be zero when $B$ consists of the singletons.

\begin{lemma}\label{numbering}
Let $B$ be a building set on $S$ such that
$I, J \in B$ with $I \cap J \ne \emptyset$ implies $I \subset J$ or $J \subset I$.
Then there exists a bijection $f:S \rightarrow \{1, \ldots, |S|\}$ such that
$f(I)$ is an interval for any $I \in B$, that is,
$f(I)$ is equal to $[i, j]=\{x \in \{1, \ldots, |S|\} \mid i \leq x \leq j\}$ for some
$1 \leq i \leq j \leq |S|$.
\end{lemma}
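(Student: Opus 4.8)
\emph{Proof proposal.} The plan is to induct on $|S|$, peeling off the top set $S$ and exploiting the fact that, under the stated hypothesis, the remaining maximal elements of $B$ partition $S$ into strictly smaller blocks on which the inductive hypothesis applies; the desired bijection is then obtained by concatenating the orderings on the blocks. This is just the classical observation that a laminar family of subsets of a linearly ordered set can be realized by intervals, adapted to building sets.

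First I would dispose of the base case $|S|=1$, where $B=\{S\}$ and any bijection works. For the inductive step, assume $|S|\geq 2$ and let $\mathcal{M}$ be the set of maximal (by inclusion) elements of $B\setminus\{S\}$. I claim $\mathcal{M}$ is a partition of $S$ into blocks of size $<|S|$. Distinct members $C,C'\in\mathcal{M}$ are disjoint: if $C\cap C'\ne\emptyset$, the hypothesis forces $C\subset C'$ or $C'\subset C$, and since both are maximal in $B\setminus\{S\}$ this gives $C=C'$. The family $\mathcal{M}$ covers $S$: for $i\in S$ we have $\{i\}\in B$ and $\{i\}\ne S$, so $\{i\}\in B\setminus\{S\}$ and hence $\{i\}$ lies below some member of $\mathcal{M}$. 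Finally, each member of $\mathcal{M}$ is a proper subset of $S$, so it has cardinality $<|S|$.

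Next, writing $\mathcal{M}=\{C_1,\ldots,C_r\}$, each restriction $B|_{C_t}$ is a building set on $C_t$ inheriting the hypothesis (it is a sub-collection of $B$), so by induction there is a bijection $f_t\colon C_t\to\{1,\ldots,|C_t|\}$ sending every member of $B|_{C_t}$ to an interval. Define $f\colon S\to\{1,\ldots,|S|\}$ by $f(i)=f_t(i)+\sum_{s<t}|C_s|$ for $i$ in the (unique) block $C_t$ containing it; this is a bijection and each $f(C_t)$ is a consecutive interval. To conclude, I would check that $f(I)$ is an interval for every $I\in B$: if $I=S$ then $f(I)=\{1,\ldots,|S|\}$; otherwise $I\in B\setminus\{S\}$ is contained in some $C_t$, hence $I\in B|_{C_t}$, so $f_t(I)$ is an interval and $f(I)=f_t(I)+\sum_{s<t}|C_s|$ is too.

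The argument is essentially routine, and there is no serious obstacle; the only points demanding a little care are verifying that $\mathcal{M}$ genuinely partitions $S$ — in particular that the presence of all singletons forces $\mathcal{M}$ to cover $S$ — and noting that the defining hypothesis on $B$ descends to each restriction $B|_{C_t}$ so that the induction may be applied.
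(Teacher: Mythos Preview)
Your argument is correct. The induction on $|S|$ via the partition $\mathcal{M}$ of maximal elements of $B\setminus\{S\}$ goes through exactly as you describe; the two points you flag (that the singletons force $\mathcal{M}$ to cover $S$, and that the hypothesis restricts to each $B|_{C_t}$) are the only things to check, and you handle them.

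The paper, however, organizes the induction differently. Rather than peeling off the top element and recursing on the resulting blocks, it inducts on the invariant $l(B)$, the maximum length of a chain $I_1\subsetneq\cdots\subsetneq I_k$ in $B$ with $|I_1|\geq 2$. It removes the \emph{bottom} layer $m(B)$ (the minimal non-singleton elements starting a maximal chain), applies the inductive hypothesis to $B\setminus m(B)$, and then, for each $J\in m(B\setminus m(B))$, locally permutes $f|_J$ so that the pairwise disjoint members of $m(B)$ sitting inside $J$ become intervals. Your top-down recursion is cleaner and more self-contained: it builds $f$ directly by concatenation and never needs to modify a previously constructed bijection. The paper's bottom-up approach, by contrast, keeps the underlying set fixed throughout and adjusts $f$ layer by layer; this is slightly more indirect, and the ``modify $f|_J$'' step is left informal, though it is not hard to make precise. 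Both arguments exploit the same laminar structure, just traversed in opposite directions.
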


\begin{proof}
We use induction on $l(B)$.
It is obvious for $l(B)=0$ and for $l(B)=1$. Assume $l(B) \geq 2$.
Note that $B \setminus m(B)$ is a building set and $m(B)$ is pairwise disjoint.
We have $l(B \setminus m(B))=l(B)-1$.
By the hypothesis of induction,
there exists a bijection $f:S \rightarrow \{1, \ldots, |S|\}$ such that
$f(I)$ is an interval for any $I \in B \setminus m(B)$.
Since $m(B \setminus m(B))$ is also pairwise disjoint, for any $I \in m(B)$,
there exists unique $J \in m(B \setminus m(B))$ such that $I \subset J$.
Hence for each $J \in m(B \setminus m(B))$, we can modify $f|_J:J \rightarrow f(J)$
without changing the image of $J$
so that every $f|_J(I)$ is an interval.
Thus we can construct a bijection satisfying the condition.
Therefore the assertion holds for $l(B)$.
\end{proof}

\begin{proof}[Proof of Theorem \ref{theorem2}]
By connecting finite directed graphs that yield toric Fano varieties with one node,
we obtain a graph that yields a toric variety isomorphic to
the product of the toric Fano varieties of the graphs.
Hence it suffices to prove the assertion when $B$ is connected
and $S=\{1, \ldots, n+1\}$.
By Lemma \ref{3types} (1), $U$ in (\ref{u}) falls into the following three cases:

(a) {\it The case where $U=\emptyset$}.
Since $X(\Delta(B))$ is Fano, $B$ satisfies the assumption of Lemma \ref{numbering}.
Hence we may assume that every element of $B$ is an interval.
We define a finite directed graph $G$ as follows: Let $V(G)=\{1, \ldots, n+1\}$.
For $K=[i, j] \in B \setminus \{S\}$, we put
\begin{equation*}
\overrightarrow{e}_K=\left\{\begin{array}{ll}
(i, j+1) & (1 \leq j \leq n), \\
(i, 1) & (j=n+1). \end{array}\right.
\end{equation*}
Let $A(G)=\{\overrightarrow{e}_K \mid K \in B \setminus \{S\}\}$.
We define a linear isomorphism $F:H \rightarrow \mathbb{R}^n$ by
$e_i-e_{i+1} \mapsto e_i$ for $1 \leq i \leq n$.
Then $F$ induces a bijection from
$\{\rho(\overrightarrow{e}_K) \mid K \in B \setminus \{S\}\}$
to $\{e_K \mid K \in B \setminus \{S\}\}$,
which is the set of vertices of the smooth Fano polytope corresponding to $\Delta(B)$.

(b) {\it The case where $|U|=2$}.
Let $U=\{I, J\}$. We may assume that $I=[1, b]$ and $J=[a, n+1]$
for $1<a\leq b<n+1$.
By Lemma \ref{3types} (2), we have
$B=\{S\} \cup U \cup B|_{I \setminus J} \cup B|_{I \cap J} \cup B|_{J \setminus I}$.
Note that $I \setminus J, I \cap J$ and $J \setminus I$ are intervals.
Furthermore, since $X(\Delta(B))$ is Fano,
$B|_{I \setminus J}, B|_{I \cap J}$ and $B|_{J \setminus I}$
satisfy the assumption of Lemma \ref{numbering}.
Hence we may assume that every element of $B$ is an interval.
Let $V(G)=\{1, \ldots, n+1\}$ and
$A(G)=\{\overrightarrow{e}_K \mid K \in B \setminus \{S\}\}$.
Then the isomorphism $F$ induces a bijection from
$\{\rho(\overrightarrow{e}_K) \mid K \in B \setminus \{S\}\}$
to $\{e_K \mid K \in B \setminus \{S\}\}$.

(c) {\it The case where $U=\{I, J, S \setminus (I \cap J)\}$ for some $I, J \in B$,
and the union of any two elements of $U$ is $S$}.
We may assume that $I=[1, b]$ and $J=[a, n+1]$
for $1<a\leq b<n+1$.
By Lemma \ref{3types} (2), we have
$B=\{S\} \cup U \cup B|_{I \setminus J} \cup B|_{I \cap J} \cup B|_{J \setminus I}$.
$I \setminus J, I \cap J$ and $J \setminus I$ are intervals.
Since $X(\Delta(B))$ is Fano,
$B|_{I \setminus J}, B|_{I \cap J}$ and $B|_{J \setminus I}$
satisfy the assumption of Lemma \ref{numbering}.
Hence we may assume that every element of $B \setminus \{S \setminus (I \cap J)\}$
is an interval.
Let $V(G)=\{1, \ldots, n+1\}$ and
$A(G)=\{\overrightarrow{e}_K \mid K \in B \setminus \{S, S \setminus (I \cap J)\}\}
\cup\{(b+1, a)\}$.
Then $F$ induces a bijection from
$\{\rho(\overrightarrow{e}_K) \mid K \in B \setminus \{S, S \setminus (I \cap J)\}\}
\cup \{\rho((b+1, a))\}$
to $\{e_K \mid K \in B \setminus \{S\}\}$.

Therefore we have constructed a finite directed graph $G$
such that the fan associated to $P_G$ is isomorphic to $\Delta(B)$.
This completes the proof of Theorem \ref{theorem2}.
\end{proof}

\begin{example}
Let $S=\{1, 2, 3, 4, 5\}$ and
\begin{equation*}
B=\{\{1\}, \{2\}, \{3\}, \{4\}, \{5\}, \{2, 3\}, \{4, 5\}, \{1, 2, 3\}, \{2, 3, 4, 5\}, \{1, 2, 3, 4, 5\}\}.
\end{equation*}
Then by Theorem \ref{theorem1}, the toric variety $X(\Delta(B))$ is Fano.
We define a finite directed graph $G$ by $V(G)=\{1, 2, 3, 4, 5\}$ and
\begin{equation*}
A(G)=\{(1, 2), (2, 3), (3, 4), (4, 5), (5, 1), (2, 4), (4, 1), (1, 4), (2, 1)\}.
\end{equation*}
Then $\Delta(B)$ is isomorphic to the fan associated to the smooth Fano polytope $P_G$.
\begin{figure}[htbp]
\begin{center}
\includegraphics[width=4cm]{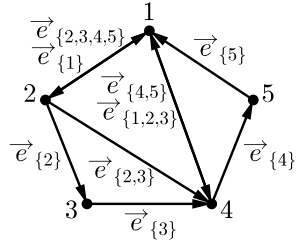}
\caption{the directed graph $G$.}
\label{directedgraph}
\end{center}
\end{figure}
\end{example}

\begin{example}
The converse of Theorem \ref{theorem2} is not true.
The finite directed graphs in Figure \ref{directedgraph2}
yield smooth Fano polytopes (see \cite[Theorem 2.2]{Higashitani}).
However, these polytopes cannot be obtained from building sets.
\begin{figure}[htbp]
\begin{center}
\includegraphics[width=5cm]{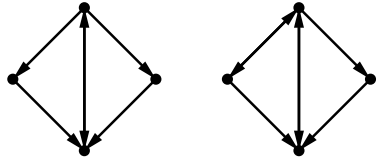}
\caption{directed graphs whose smooth Fano polytopes cannot be obtained from building sets.}
\label{directedgraph2}
\end{center}
\end{figure}
\end{example}


\begin{thebibliography}{9}
\bibitem{Higashitani} A. Higashitani,
{\it Smooth Fano polytopes arising from finite directed graphs},
Kyoto J. Math. 55 (2015), 579--592.
\bibitem{Obro} M. {\O}bro,
{\it An algorithm for the classification of smooth Fano polytopes},
arXiv:0704.0049.
\bibitem{Oda} T. Oda,
{\it Convex Bodies and Algebraic Geometry.
An Introduction to the Theory of Toric Varieties},
Ergeb.\ Math.\ Grenzgeb. (3) 15, Springer-Verlag, Berlin, 1988.
\bibitem{Postnikov} A. Postnikov,
{\it Permutohedra, associahedra, and beyond},
Int.\ Math.\ Res.\ Not.\ IMRN 2009, no.\ 6, 1026--1106.
\bibitem{Suyama} Y. Suyama,
{\it Toric Fano varieties associated to finite simple graphs},
Tohoku Math.\ J., to appear; arXiv:1604.08440.
\bibitem{Zelevinsky} A. Zelevinsky,
{\it Nested complexes and their polyhedral realizations},
Pure Appl.\ Math.\ Q. 2 (2006), no.\ 3, 655--671.
\end{thebibliography}
\end{document}